\begin{document} 

\numberwithin{equation}{section}
\newtheorem{thm}[equation]{Theorem}
\newtheorem{pro}[equation]{Proposition}
\newtheorem{prob}[equation]{Problem}
\newtheorem{qu}[equation]{Question}
\newtheorem{cor}[equation]{Corollary}
\newtheorem{con}[equation]{Conjecture}
\newtheorem{lem}[equation]{Lemma}
\theoremstyle{definition}
\newtheorem{ex}[equation]{Example}
\newtheorem{defn}[equation]{Definition}
\newtheorem{observation}[equation]{Observation}
\newtheorem{rem}[equation]{Remark}

\renewcommand{\rmdefault}{ptm}
\newcommand{\pdarrow}[2]{\ar@<0.5ex>[d]^-{#1} \ar@<-0.5ex>[d]_-{#2}}

\def \calB{\mathcal B}
\def\frak{\mathfrak}
\def\alp{\alpha}
\def\be{\beta}
\def\jeden{1\hskip-3.5pt1}
\def\om{\omega}
\def\bigstar{\mathbf{\star}}
\def\ep{\epsilon}
\def\vep{\varepsilon}
\def\Om{\Omega}
\def\la{\lambda}
\def\La{\Lambda}
\def\si{\sigma}
\def\Si{\Sigma}
\def\Cal{\mathcal}
\def\m {\mathcal}
\def\ga{\gamma}
\def\Ga{\Gamma}
\def\de{\delta}
\def\De{\Delta}
\def\bF{\mathbb{F}}
\def\bH{\mathbb H}
\def\bPH{\mathbb {PH}}
\def \bB{\mathbb B}
\def \bA{\mathbb A}
\def \bC{\mathbb C}
\def \bOB{\mathbb {OB}}
\def \bM{\mathbb M}
\def \bOM{\mathbb {OM}}
\def \mA{\mathcal A}
\def \mB{\mathcal B}
\def \mC{\mathcal C}
\def \mR{\mathcal R}
\def \mH{\mathcal H}
\def \mM{\mathcal M}
\def \mV{\mathcal V}
\def \mTOP{\mathcal {TOP}}
\def \mAB{\mathcal {AB}}
\def \bI{\mathbb I}
\def \bK{\mathbb K}
\def \bG{\mathbf G}
\def \bL{\mathbb L}
\def\bN{\mathbb N}
\def\bR{\mathbb R}
\def\bP{\mathbb P}
\def\bZ{\mathbb Z}
\def\bC{\mathbb  C}
\def \bQ{\mathbb Q}
\def\op{\operatorname}

\newcommand{\maru}[1]{\ooalign{
\hfil\resizebox{.8\width}{\height}{#1}\hfil
\crcr
\raise.1ex\hbox{\large$\bigcirc$}}}

\newcommand{\Maru}[1]{\ooalign{
\hfil\resizebox{.6\width}{\height}{#1}\hfil
\crcr
\raise.1ex\hbox{\LARGE$\bigcirc$}}}

\newcommand{\MMaru}[1]{\ooalign{
\hfil\resizebox{.5\width}{\height}{#1}\hfil
\crcr
\raise.1ex\hbox{\Huge$\bigcirc$}}}

\newcommand{\MMMaru}[1]{\ooalign{
\hfil\resizebox{.4\width}{\height}{#1}\hfil
\crcr
\raise.1ex\hbox{\Huge$\bigcirc$}}}

\newcommand{\MMMMaru}[1]{\ooalign{
\hfil\resizebox{.3\width}{\height}{#1}\hfil
\crcr
\raise.1ex\hbox{\Huge$\bigcirc$}}}

\newcommand{\MMMMMaru}[1]{\ooalign{
\hfil\resizebox{.2\width}{\height}{#1}\hfil
\crcr
\raise.1ex\hbox{\Huge$\bigcirc$}}}

\title{Co-operational bivariant theory
}

\thanks {
\noindent
\emph{keywords} : bivariant theory, operational bivariant theory, cohomology operation \\
\emph{Mathematics Subject Classification 2000}: 55N35, 55S99, 14F99}

\author{Shoji Yokura}

\date{}
\address{Graduate School of Science and Engineering, Kagoshima University, 1-21-35 Korimoto, Kagoshima, 890-0065, Japan}
\email{yokura@sci.kagoshima-u.ac.jp}

\begin{abstract}
For a covariant functor 
W. Fulton and R. MacPherson defined \emph{an operational bivariant theory} associated to this covariant functor. In this paper we will show that given a contravariant functor 
one can similarly construct a ``dual" version of an operational bivariant theory, which we call a \emph{co-operational} bivariant theory. 
If a given contravariant functor is the usual cohomology theory, then our co-operational bivariant group 
for the identity map consists of what are usually called ``cohomology operations". In this sense, our co-operational bivariant theory consists of \emph{``generalized"} cohomology operations.
\end{abstract}

\maketitle

\section{Introduction}
In \cite{FM} W. Fulton and R. MacPherson have introduced \emph{bivariant theory} $\mathbb B^*(X \xrightarrow f Y)$  with an aim to deal with Riemann--Roch type theorems for singular spaces and to unify them. $\mathbb B_*(X):= \mathbb B^{-*}(X \xrightarrow {} pt)$ becomes a covariant functor and $\mathbb B^*(X):= \mathbb B^{*}(X \xrightarrow {\op{id}_X} X)$  
 a contravariant functor. In this sense  $\mathbb B^*(X \xrightarrow f Y)$ is called a \emph{bivariant} theory\footnote{As recalled in \S 2, a bivariant theory $\mathbb B^*$ assigns to each map $f:X \to Y$ a graded abelian group (sometimes, a set, etc.) $\mathbb B^*(X \xrightarrow f Y)$. For the sake of simplicity, sometimes we write a bivariant theory $\mathbb B^*(X \to Y)$ or $\mathbb B(X \to Y)$ , instead of a bivariant theory $\mathbb B^*$ or $\mathbb B$  , unless some confusion is possible.}. 
In \S \ref{BT} below we make a quick recall of Fulton--MacPherson's bivariant theory. Here we just remark that in \cite[\S 2.2 Axioms for a bivariant theory]{FM} the value $\mathbb B^*(X \xrightarrow f Y)$ is in the category of graded abelian groups, but as remarked in \cite[Remark, p.22]{FM} $\mathbb B^*(X \xrightarrow f Y)$ can be valued \emph{in an arbitrary category}, e.g., the category of abelian groups (see \cite[\S 6.1.2 Definition of $\mathbb F$, p.60]{FM}), the category of sets (see \cite[\S 4.3 Differentiable Riemann--Roch]{FM}), the category of derived categories (see \cite[\S 7 Grothendieck Duality and Derived Functors]{FM}), etc.
 
As a typical example of such a bivariant theory Fulton and MacPherson defined a bivariant homology theory $\mathbb H^*(X \xrightarrow f Y)$ constructed from the usual cohomology theory $H^*$, which is a multiplicative cohomology theory. Here the multiplicativity is crucial. Given a homology theory, i.e., a covariant functor with values in the category of (graded) abelian groups, in \cite[\S 8.1]{FM} they defined what is called \emph{an operational bivariant theory} or simply \emph{an operational theory}. 
Here we note that in fact there are at least \emph{three more} kinds of operational bivariant theories requiring some other extra conditions, depending on covariant functors to be treated:
\begin{itemize}
\item \cite [\S 8.3]{FM} requiring \emph{one more extra condition}, 
\item \cite[\S 17.1]{Ful} and \cite{AP, AGP} requiring \emph{two more extra conditions} and 
\item \cite{GK} requiring \emph{three more extra conditions}.
\end{itemize}
In this paper we do not consider these refined ones for the sake of simplicity.

In this paper, motivated by the definition of an operational bivariant theory given in \cite[\S 8.1]{FM}, we define \emph{a co-operational bivariant theory}\footnote{A finer co-operational bivariant theory, motivated by the other refined or sophisticated operational bivariant theories listed above will be given and discussed in a different paper.} from \emph{a contravariant functor} which is not necessarily a multiplicative cohomology theory. Let $F^*$ be a contravariant functor such as the usual cohomology theory $H^*$, the K-theory $K(X)$ of complex vector bundles. Then the co-operational bivariant theory $\bB^{coop}F^*(X \xrightarrow f Y)$ is defined in a way analogous to the definition of Fulton--MacPherson's operational bivariant theory associated to a covariant functor $F_*$. In this paper Fulton--MacPherson's operational bivariant theory shall be denoted by $\bB^{op}F^*(X \xrightarrow f Y)$. For example, let $F^*$ be the usual cohomology groups $H^*$. It turns out that for the identity $\op{id}_X:X \to X$ the co-operational bivariant 
theory $\bB^{coop}H^*(X \xrightarrow {\op{id}_X} X)$ consists of elements $c$'s such that each $c$ is a collection of homomorphisms $c_g: H^*(X') \to H^*(X')$ for all $g:X' \to X$ such that for maps 
\begin{equation}\label{maps}
X'' \xrightarrow h X' \xrightarrow g X
\end{equation}
the following diagram commutes:
\begin{equation}\label{cd1}
\xymatrix
{H^*(X'') \ar[d]_{c_{g \circ h}} && H^*(X')\ar[ll]_{h^*} \ar[d]^{c_g}\\
H^*(X'') && H^*(X') \ar[ll]^{h^*}.
}
\end{equation}
In other words, $c_g: H^*(X') \to H^*(X')$ is nothing but what is usually called \emph{a cohomology operation}\footnote{Speaking of ``operation", in \cite{Vishik2} A. Vishik considers an operation between two oriented cohomology theories \cite{LM} (cf. \cite{LP}): An operation from $A^*$ to $B^*$ is a natural transformation from $A^*$ to $B^*$ considered as \emph{contravariant functors} on the category of smooth schemes, i.e., operations commute with \emph{pull-backs (but not necessarily with push-forwards).} On the other hand, in \cite{Sechin} P. Sechin considers \emph{a multiplicative operation} between two oriented cohomology theories.} (e.g., see \cite{Steenrod1, Steenrod2, Steenrod3} and \cite{nLab2}) for the given cohomology group $H^*$. Here we note that we consider such operations for all objects $X'$ over $X$, i.e., $X' \to X$ (in other words, we consider such operations on what is usually called \emph{the category over $X$}), as we define above. In this paper, we can consider a general contravariant functor $F^*$ with values in a certain category. As Fulton and MacPherson consider a homology theory, i.e., a covariant functor with values in the category of graded abelian groups in \cite[\S 8.1]{FM}, in \S \ref{coop} below we consider a contravariant functor $F^*$ with values in the category of graded abelian groups as a model case. So, in this case, $c_g: F^*(X') \to F^*(X')$ is a homomorphism of graded abelian groups. If $F^*$ is a contravariant functor with values in an arbitrary category, such as the category of sets, abelian groups, derived categories, then  $c_g: F^*(X') \to F^*(X')$ is \emph{a morphism} in the category. Even if we consider such an arbitrary category, a morphism $c_g: F^*(X') \to F^*(X')$  shall be still called \emph{``cohomology" operation}, although it should be called ``a contravariant functor operation" if we ``follow" the way of naming ``cohomology operation". 

A natural transformation $\tau: F^* \to G^*$ between two contravariant functors can be extended to a Grothendieck transformation
$$\tau^{coop}: \bB^{coop}_{\tau}F^*(X \xrightarrow f Y) \to \bB^{coop}G^*(X \xrightarrow f Y)$$
where $\bB^{coop}_{\tau}F^*(X \xrightarrow f Y)$ is a subgroup of $\bB^{coop}F^*(X \xrightarrow f Y)$
and \emph{depends on $\tau$}. 
In particular, for the identity map $\op{id}_X:X \to X$, 
$$\bB^{coop}_{\tau}F^*(X \xrightarrow {\op{id}_X} X)$$
consists of elements $c$'s such that each $c$ is a collections of homomorphisms $c_g:F^*(X') \to F^*(X')$, for all $g:X' \to X$ such that the above diagram (\ref{cd1}) commutes with $H^*$ being replaced by $F^*$ and the following diagram also commutes for some $d \in \bB^{coop}G^*(X \xrightarrow {\op{id}_X} X)$:
\begin{equation}\label{1-cd-FG}
\xymatrix
{ F^*(X') \ar[d]_{c_g } \ar[rr]^{\tau} && G^*(X') \ar[d]^{d_g}  \\
F^*(X') \ar[rr]_{\tau} && G^*(X') \\
}
\end{equation}
And the homomorphism (for the identity $\op{id}_X:X \to X$)
\begin{equation}\label{tau-c-d}
\tau^{coop}: \bB^{coop}_{\tau}F^*(X \xrightarrow {\op{id}_X} X) \to \bB^{coop}G^*(X \xrightarrow {\op{id}_X} X) \quad \text{defined by} \quad \tau^{coop}(c):=d
\end{equation}
also means the above commutative diagrams (\ref{1-cd-FG}) for each $g:X' \to X$. 

A natural transformation $\tau: F^* \to G^*$ of contravariant functors $F^*$ and $G^*$, of course, means that it assigns a homomorphism $\tau: F^*(X) \to F^*(X)$ to each object $X$ and the following diagram commutes for $h:X'' \to X'$:
\begin{equation}\label{cd-FG-naturaliy}
\xymatrix
{ F^*(X') \ar[d]_{h^*} \ar[rr]^{\tau} && G^*(X') \ar[d]^{h^*}  \\
F^*(X'') \ar[rr]_{\tau} && G^*(X'') \\
}
\end{equation}

Obviously the homomorphism $\tau: F^*(X) \to G^*(X)$ for each $X$ can be interpreted as the following trivial commutative diagram 
\begin{equation}\label{cd-FG-id}
\xymatrix
{ F^*(X) \ar[d]_{\op{id}_{F^*(X)}} \ar[rr]^{\tau} && G^*(X) \ar[d]^{\op{id}_{G^*(X)}}  \\
F^*(X) \ar[rr]_{\tau} && G^*(X). 
}
\end{equation}
If we adopt this interpretation, then the above commutative diagram (\ref{cd-FG-naturaliy}) meaning \emph{``naturality" of $\tau$ of two contravariant functors} can be expressed as the following commutative cubic diagram:
\begin{equation}\label{1-cd-FGX-X'-X''}
\xymatrix
{F^*(X') \ar[dd]_{\op{id}_{F^*(X')}} \ar[rd]^{h^*} \ar[rr]^{\tau} && G^*(X') \ar'[d][dd]^(.4){\op{id}_{G^*(X')}} \ar[rd]^{h^*} \\
& F^*(X'') \ar[dd]_(.3){ \op{id}_{F^*(X'')}} \ar[rr]^(.4){\tau}  &&  G^*(X'') \ar[dd]^{\op{id}_{G^*(X'')}} \\
F^*(X') \ar'[r] [rr]_{\tau \quad \quad }  \ar[rd]_{h^*} && G^*(X') \ar[rd]_{h^*} \\
& F^*(X'') \ar[rr] _{\tau }  &&  G^*(X'').
}
\end{equation}
Here we emphasize that the identity map $\op{id}_{F^*(X)}:F^*(X) \to F^*(X)$ for any contravariant functor $F^*$ is obviously a cohomology operation, which shall be called the ``\emph{identity cohomology operation}" in this paper.
Let us consider the following class $\jeden_X^{F^*} \in \bB^{coop}_{\tau}F^*(X \xrightarrow {\op{id}_X} X)$ of 
identity cohomology operations:
$$\jeden_X^{F^*}:= \{ \op{id}_{F^*(X')}:F^*(X') \to F^*(X') \, \, | \, \, g:X' \to X \}$$
Hence, $\op{id}_{F^*(X')} = (\jeden_X^{F^*})_g$ if we use the above notation $c_g$, and  
the above diagram (\ref{1-cd-FGX-X'-X''}) becomes
\begin{equation}\label{1-cd-FGX-X'-X''-b}
\xymatrix
{F^*(X') \ar[dd]_{(\jeden_X^{F^*})_g} \ar[rd]^{h^*} \ar[rr]^{\tau} && G^*(X') \ar'[d][dd]^(.4){(\jeden_X^{G^*})_g} \ar[rd]^{h^*} \\
& F^*(X'') \ar[dd]_(.3){(\jeden_X^{F^*})_{g \circ h}} \ar[rr]^(.4){\tau}  &&  G^*(X'') \ar[dd]^{(\jeden_X^{G^*})_{g \circ h}} \\
F^*(X') \ar'[r] [rr]_{\tau \quad \quad }  \ar[rd]_{h^*} && G^*(X') \ar[rd]_{h^*} \\
& F^*(X'') \ar[rr] _{\tau }  &&  G^*(X'').
}
\end{equation}
Therefore, by (\ref{tau-c-d}) we get
\begin{equation}\label{tau-F-G}
\tau^{coop}(\jeden_X^{F^*})= \jeden_X^{G^*}.
\end{equation}
Namely, the natural transformation $\tau:F^* \to G^*$, in other words (\ref{1-cd-FGX-X'-X''}), can be interpreted as (\ref{tau-F-G}), using the Grothendieck transformation $\tau^{coop}: \bB^{coop}_{\tau}F^* \to \bB^{coop}G^*$ of the associated co-operational bivariant theories. 

Here we give two more non-trivial examples:
Firstly, consider the Chern character $ch: K(-) \to H^{ev}(-;\bQ)$ (e.g., see \cite[\S 3 Properties]{nLab}). Then for each positive integer $k$ we have the Adams operation
$$\Psi^k: K(-) \to K(-)$$
and the Adams-like operation on the even-degree rational cohomology
$$\Psi^k_H:H^{ev}(-;\bQ) \to H^{ev}(-;\bQ)$$
which is defined by $\Psi^k_H(a):=k^r \cdot a$ for each $a \in H^{2r}(-;\bQ)$
and we have the following commutative diagram
\begin{equation}\label{cd3}
\xymatrix
{K(X) \ar[d]_{\Psi^k} \ar[rr]^{ch} && H^{ev}(X;\bQ) \ar[d]^{\Psi^k_H}\\
K(X) \ar[rr]_{ch} && H^{ev}(X;\bQ)
}
\end{equation}
which is natural with respect to the base change, namely we do have the following commutative cube for $h:X'' \to X'$:
\begin{equation}\label{ccube2}
\xymatrix
{K(X') \ar[dd]_{\Psi^k} \ar[rd]^{h^*} \ar[rr]^{ch} && H^{ev}(X';\bQ) \ar'[d][dd]^(.3){\Psi^k_H} \ar[rd]^{h^*} \\
& K(X'') \ar[dd]_(.3){\Psi^k} \ar[rr]^(.3){ch}  &&  H^{ev}(X'';\bQ) \ar[dd]^{\Psi^k_H} \\
K(X') \ar'[r] [rr]_{ch \quad \quad }  \ar[rd]_{h^*} && H^{ev}(X';\bQ) \ar[rd]_{h^*} \\
& K(X'') \ar[rr] _{ch \quad \quad  \quad }  &&  H^{ev}(X'';\bQ).
}
\end{equation}
The above commutative diagram (\ref{cd3}) is nothing but \emph{a part} of the homomorphism (see Remark \ref{rem} below)
$$ch^{coop}: \bB^{coop}_{ch}K^*(X \xrightarrow {\op{id}_X} X) \to \bB^{coop}H^*(X \xrightarrow {\op{id}_X} X).$$
\begin{rem}\label{rem}
Here we remark that the Adams operation $\Psi^k: K(-) \to K(-)$ is a ring-homomorphism, hence the addition of two Adams operations is not necessarily an Adams operation, since the addition of two ring-homomorphisms is not necessarily a ring-homomorphism because in general $(f+g)(xy)\not = (f+g)(x)(f+g)(y)$, although the composition of two Adams operations is an Adams operation. Thus $\bB^{coop}K^*(X \xrightarrow {\op{id}_X} X)$ is \emph{a set and cannot be an Abelian group}, provided that the contravariant functor $K(-)$ is considered to have values in the category of commutative rings. However, if we consider only the abelian group structure, ignoring the structure of product making $K(-)$ a ring, then $\bB^{coop}K^*(X \xrightarrow {\op{id}_X} X)$ \emph{is an Abelian group}.
\end{rem}
Here we emphasize that ``a part" means that it is possible that there are many other cohomology operations on $K$-theory and rational even-dimensional cohomology and natural transformations of them like (\ref{cd3}). 
For example, in (\ref{cd3}) $\Psi^k$ and $\Psi^k_H$ can be respectively replaced by the identity cohomology operations 
$\op{id}_{K(X)}$ and $\op{id}_{H^{ev}(X, \mathbb Q)}$.
Therefore 
$$ch^{coop}: \bB^{coop}_{ch}K^*(X \to Y) \to \bB^{coop}H^*(X \to Y).$$
is an extension of the above natural transformation (\ref{cd3}) of cohomology operations of $K$-theory and rational even-dimensional cohomology to a Grothendieck transformation.

Secondly, we consider the projection $\overline{pr}: \Omega^*(-) \to CH^*(-)/2$ from Levine--Morel's algebraic cobordism $\Omega^*(X)$ to the Chow group $CH^*(X)/2$, which is the Chow cohomology group $CH^*(X)/2$ modded out by 2-torsions (see \cite{Vishik}). Then, as shown in \cite{Vishik} (see Brosnan \cite{Bros}, M. Levine \cite{Levine} and A. Merkurjev \cite{Merk})  there exists (unique) operations $S^i:CH^*(X)/2 \to CH^{*+i}(X)/2$, called ``Steenrod operation", such that the following diagram commutes:
\begin{equation}\label{cd-omega}
\xymatrix
{\Omega^*(X) \ar[d]_{S^i_{LN}} \ar[rr]^{\overline{pr}} && CH^*(X)/2 \ar[d]^{S^i}\\
\Omega^*(X) \ar[rr]_{\overline{pr}} && CH^*(X)/2
}
\end{equation}
Here $S^i_{LN}$ is a Landwever--Novikov operation (see \cite{LM}) and both Steenrod operation and Landwever--Novikov operations commute with pullbacks, namely we have a commutative cube as the above (\ref{ccube2}), which is, as in the case of the above Chern  character, nothing but a part of the homomorphism
$$pr^{coop}: \bB^{coop}_{\overline{pr}}\Omega^*(X \xrightarrow {\op{id}_X} X) \to \bB^{coop}CH^*(X \xrightarrow {\op{id}_X} X)/2.$$
Hence we have the Grothendieck transformation
$$pr^{coop}: \bB^{coop}_{\overline{pr}}\Omega^*(X \to Y) \to \bB^{coop}CH^*(X \to Y)/2$$
is an extension of the above natural transformation (\ref{cd-omega}) of Levine--Morel's algebraic cobordism  and the Chow group $CH^*(X)/2$ to a Grothendieck transformation.

The up-shot is that 
\begin{enumerate}
\item \emph{a co-operational bivariant theory ``captures" or ``interprets" cohomology operations ``bivariant theoretically"}, or \emph{a co-operational bivariant theory is a reasonable general setting to study cohomology operations} and 
\item a Grothendieck transformation
$$\tau^{coop}: \bB^{coop}_{\tau}F^*(X \xrightarrow f Y) \to \bB^{coop}G^*(X \xrightarrow f Y)$$
is an extension or a generalization of a natural transformation of two kind cohomology operations, as shown by the above two examples.
\end{enumerate}

Finally we remark that Fulton--MacPherson's operational bivariant theory is actually described by using the notion of ``homology operation\footnote{In \cite[3.1 Definition]{HM} R. Hardt and C. McCrory define ``stable homology operation" as a natural transformation of homology functor compatible with the suspension isomorphism, just like a stable cohomology operation, which is a natural transformation of cohomology functor compatible with the suspension isomorphism. Hence it is quite natural that ``homology operation" is considered as a natural transformation of homology functor, just like cohomology operation. In fact, in \cite[Example 4.1.25]{LM} the above Landwever--Novikov operation is described as a homology operation.}" instead of ``cohomology operation". In this sense both theories are really \emph{``operational"} bivariant theories. Furthermore works will be done in a different paper.

\section{Fulton--MacPherson's bivariant theories}\label{BT}

We make a quick review of Fulton--MacPherson's bivariant theory \cite {FM}, since we refer to some axioms required on the theory in later sections.

Let $\mathscr C$ be a category which has a final object $pt$ and on which the fiber product or fiber square is well-defined. Also we consider the following classes:
\begin{enumerate}
\item a class $\mathcal C$ of maps, called ``confined maps" (e.g., proper maps, in algebraic geometry), which are \emph{closed under composition and base change, and contain all the identity maps}, and 
\item a class $\mathcal Ind$ of commutative diagrams, called ``independent squares" (e.g., fiber square, ``Tor-independent" square, in algebraic geometry), 
satisfying that

(i) if the two inside squares in  
$$\CD
X''@> {h'} >> X' @> {g'} >> X \\
@VV {f''}V @VV {f'}V @VV {f}V\\
Y''@>> {h} > Y' @>> {g} > Y \endCD
\quad \quad \qquad \text{or} \qquad \quad \quad 
\CD
X' @>> {h''} > X \\
@V {f'}VV @VV {f}V\\
Y' @>> {h'} > Y \\
@V {g'}VV @VV {g}V \\
Z'  @>> {h} > Z \endCD
$$
are independent, then the outside square is also independent,

(ii) any square of the following forms are independent:
$$
\xymatrix{X \ar[d]_{f} \ar[r]^{\op {id}_X}&  X \ar[d]^f & & X \ar[d]_{\op {id}_X} \ar[r]^f & Y \ar[d]^{\op {id}_Y} \\
Y \ar[r]_{\op {id}_X}  & Y && X \ar[r]_f & Y}
$$
where $f:X \to Y$ is \emph{any} morphism. 
\end{enumerate}

\begin{rem}Given an independent square, 
its transpose is \emph{not necessarily} independent. For example, let us consider the category of topological spaces and continuous maps. Let \emph{any} map be confined, and we allow a fiber square
\quad $\CD
X' @> {g'} >> X \\
@V {f'}VV @VV {f}V\\
Y' @>> {g} > Y \endCD
$ \quad 
to be \emph{independent only if $g$ is proper} (hence $g'$ is also proper). Then its transpose is \emph{not independent unless 
$f$ is proper}. (Note that the pullback of a proper map by any continuous map is proper, because ``proper" is equivalent to ``universally closed", i.e., the pullback by any map is closed.)
\end{rem}
\begin{def}\label{BivariantTheory}
A \emph{bivariant theory} $\mathbb B$ on a category $\mathscr C$ with values in the category of graded abelian groups\footnote{As we mentioned in Introduction, instead of graded abelian groups, we consider also sets, e.g., such as the set of complex structures and the set of Spin structures (see \cite[\S 4.3.2]{FM}),  and categories, e.g., such as the derived (triangulated) category of $f$-perfect complexes (see \cite[\S 7.1 Grothendieck duality]{FM}) as well.}
 is an assignment to each morphism
$ X  \xrightarrow{f} Y$
in the category $\mathscr C$ a graded\footnote{The grading is sometimes ignored. In this case we can consider that the grading is only $0$, i.e, $\bB(X \to Y) = \bB^0(X \to Y)$. For an example for such a case, see \cite[\S 6.1.2 Definition of $\mathbb F$]{FM} where $\mathbb F(X \to Y) = \mathbb F^0(X \to Y)$.}
abelian group $$\bB(X  \xrightarrow{f} Y)$$
which is equipped with the following three basic operations. The $i$-th component of $\bB(X  \xrightarrow{f} Y)$, $i \in \mathbb Z$, is denoted by $\bB^i(X  \xrightarrow{f} Y)$.
\begin{enumerate}
\item {\bf Product}: For morphisms $f: X \to Y$ and $g: Y
\to Z$, the product 
$$\bullet: \bB^i( X  \xrightarrow{f}  Y) \otimes \bB^j( Y  \xrightarrow{g}  Z) \to
\bB^{i+j}( X  \xrightarrow{g\circ f}  Z).$$
\item {\bf Pushforward}: For morphisms $f: X \to Y$
and $g: Y \to Z$ with $f$ \emph {confined}, 

the pushforward 
$f_*: \bB^i( X  \xrightarrow{g\circ f} Z) \to \bB^i( Y  \xrightarrow{g}  Z).$
\item {\bf Pullback} : For an \emph{independent} square \qquad $\CD
X' @> g' >> X \\
@V f' VV @VV f V\\
Y' @>> g > Y, \endCD
$

the pullback 
$g^* : \bB^i( X  \xrightarrow{f} Y) \to \bB^i( X'  \xrightarrow{f'} Y'). $
\end{enumerate}

An element $\alp \in \bB(X \xrightarrow f Y)$ is sometimes expressed as follows:
\[
\xymatrix
{
X \ar[rr]_f^{\maru{$\alp$}} && Y
} 
\]

These three 
operations are required to satisfy the following seven compatibility 
axioms (\cite [Part I, \S 2.2]{FM}):

\begin{enumerate}
\item[($A_1$)] {\bf Product is associative}: for 
\xymatrix
{
X \ar[r]_f^{\maru{$\alp$}} & Y \ar[r]_g^{\maru{$\be$}} & Z \ar[r]_h^{\maru{$\ga$}} & Z
}
$$(\alp \bullet\be) \bullet \ga = \alp \bullet (\be \bullet \ga).$$
\item[($A_2$)] {\bf Pushforward is functorial}:for 
\xymatrix
{
X  \ar@/^10pt/[rrr]^{\maru{$\alp$}}  \ar[r]_f & Y \ar[r]_g  & Z \ar[r]_h  & W
}
with confined $f, g$, 
$$(g\circ f)_* \alp = g_*(f_*\alp).$$
\item[($A_3$)] {\bf Pullback is functorial}: given independent squares
$$\CD
X''@> {h'} >> X' @> {g'} >> X \\
@VV {f''}V @VV {f'}V @VV {f}V\\
Y''@>> {h} > Y' @>> {g} > Y \endCD
$$
$$(g \circ h)^* = h^* \circ g^*.$$
\item[($A_{12}$)] {\bf Product and pushforward commute}: $f_*(\alp \bullet\be)  = f_*\alp \bullet \be$ \\

for 
\xymatrix
{
X \ar@/^10pt/[rr]^{\maru{$\alp$}}  \ar[r]_f & Y \ar[r]_g  & Z \ar[r]_h^{\maru{$\be$}} & W
}
with confined $f$, 
\item[($A_{13}$)] {\bf Product and pullback commute}: \quad $h^*(\alp \bullet\be)  = {h'}^*\alp \bullet h^*\be$ \quad for independent squares
 \[
\xymatrix{X' \ar[rrr]^{h''} \ar[d]_{f'}
 &&& X \ar[d]_f^{\maru{$\alp$}}   \\
Y' \ar[d]_{g'}\ar[rrr]^{h'} &&& Y \ar[d]_g^{\maru{$\be$}} \\
Z' \ar[rrr]_h &&& Z
} 
\]
\item[($A_{23}$)] 
{\bf Pushforward and pullback commute}: \quad $f'_*(h^*\alp)  = h^*(f_*\alp)$ \quad for independent squares with $f$ confined 
\[
\xymatrix{
X' \ar[rrr]^{h''} \ar[d]_{f'}  &&& X \ar[d]_f \ar@/^11pt/[dd]^{\maru{$\alp$}} \\
Y' \ar[d]_{g'} \ar[rrr]^{h'} &&& Y \ar[d]_g\\
Z' \ar[rrr]_h &&& Z
} 
\]
\item[($A_{123}$)] {\bf Projection formula}: \, $g'_*(g^*\alp \bullet \be)  = \alp \bullet g_*\be$ \quad for an independent square with $g$ confined 
\[
\xymatrix{
X' \ar[r]^{g'} \ar[d]^{f'}_{\Maru{$g^*\alp$}} 
& X \ar[d]_f^{\maru{$\alp$}}  \\
Y' \ar@/_18pt/[rrrrr]^{\maru{$\beta$}} \ar[r]^g & Y \ar[rrrr]_h^{\quad \Maru{$g_*\be$}} &&&& Z
} 
\]
\end{enumerate}
We also require the theory $\bB$ to have multiplicative units:
\begin{enumerate}
\item[({\bf Units})] For all $X \in \mathscr C$, there is an element $1_X \in \bB^0( X  \xrightarrow{\op {id}_X} X)$ such that $\alp \bullet 1_X = \alp$ for all morphisms $W \to X$ and all $\alp \in \bB(W \to X)$, and such that $1_X \bullet \beta = \beta $ for all morphisms $X \to Y$ and all $\beta \in \bB(X \to Y)$, and such that $g^*1_X = 1_{X'}$ for all $g: X' \to X$.
\end{enumerate}
\end{def}

A bivariant theory unifies both a covariant theory and a contravariant theory in the following sense:
For a bivariant theory $\bB$, its associated covariant functors and contravariant functors are defined as follows:
\begin{enumerate}
\item $\bB_*(X):= \bB(X \xrightarrow {a_X} pt)$ is covariant for confined morphisms and the grading is given by $\bB_i(X):= \bB^{-i}(X \xrightarrow {a_X} pt)$.
\item $\bB^*(X) := \bB(X  \xrightarrow{\op{id}_X}  X)$ is contravariant for all morphisms and the grading is given by $\bB^j(X):= \bB^j(X   \xrightarrow{\op{id}_X}  X)$.
\end{enumerate}

A typical example of a bivariant theory is the bivariant homology theory $\bH(X \xrightarrow f Y)$ 
constructed from the singular cohomology theory $H^*(-)$, which unifies the Borel--Moore homology $H_*^{BM}(X):=\bH^{-*}(X \to pt)$ and the singular cohomology $H^*(X):=\bH^*(X \xrightarrow {\op{id}_X} X)$. Here the underlying category $\mathscr C$ is the category of spaces embeddable as closed subspaces of some Euclidean spaces $\mathbb R^n$ and continuous maps between them (see \cite[\S 3 Topological Theories]{FM}).
More generally, Fulton--MacPherson's (general) bivariant homology theory 
$$h^*(X \to Y)$$
(here, using their notation) 
is constructed from \emph{a multiplicative cohomology theory $h^*(-)$} \cite[\S 3.1]{FM}
Here the cohomology theory $h^*$ is either ordinary or generalized. 
A cohomology theory $h^*$ is called \emph{multiplicative} if for pairs $(X,A), (Y,B)$ there is a graded pairing (exterior product)
$$h^i(X,A) \times h^j(Y,B) \xrightarrow {\times} h^{i+j}(X \times Y, X \times B \sqcup A \times Y)$$
such that it is associative and graded commutative, i.e., $\alp \times \be = (-1)^{i+j} \be \times \alp$.
A typical example of a multiplicative ordinary cohomology theory is the singular cohomology theory. The topological complex $K$-theory $K(-)$ and complex cobordism theory $\Omega^*(-)$ are  multiplicative generalized  cohomology theories. We consider the category of spaces embeddable as closed subspaces in some Euclidian spaces $\mathbb R^N$ and continuous maps. For example, \emph{Whitney's embedding theorem} says that any manifold of real dimension $m$ can be embedded as a closed subspace of $\mathbb R^{2m}$. We also note that a complex algebraic variety is embeddable\footnote{This is because the variety $X$ is covered by finitely many affine varieties, which are embedded (as closed subsets) into $\mathbb R^n$ for some $n$, thus it follows from \cite[\S 8.8 Proposition]{Dold} that the variety X is itself embedded (as a closed subset) into $\mathbb R^N$ for some $N$. }  as a closed subspace of some Euclidean space $\mathbb R^N$. We let a confined map be a proper map and an independent square be a fiber square.
\begin{def}\label{biv-hom}
For a continuous map $f:X \to Y$, choose a map $\phi: X \to \mathbb R^n$ such that $\Phi=(f, \phi): X \to Y \times \mathbb R^n$ defined by $\Phi(x):=(f(x), \phi(x))$ is  a closed embedding\footnote{ Such a map always exist, since our space is embeddable as a closed subspace of some $\mathbb R^N$, thus this embedding is considered as $\phi:X \to \mathbb R^N$, then $\Phi=(f, \phi)$ is also a closed embedding.}. Then we define
\begin{equation}\label{embed}
h^*(X \to Y):= h^{i+n}(Y \times \mathbb R^n, Y \times \mathbb R^n \setminus \Phi(X)).
\end{equation}
\end{def}
\begin{thm}(\cite[p.34-p.38]{FM}) The above definition (\ref{embed}) is independent of the choice of the embedding $\phi$, thus $\Phi$, and $h^*(X \to Y)$ is a bivariant theory.
\end{thm}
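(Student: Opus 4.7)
The statement has two parts: (a) that the relative group $h^{i+n}(Y\times\mathbb{R}^n, Y\times\mathbb{R}^n\setminus\Phi(X))$ is independent of the choice of $\phi$, and (b) that the resulting assignment $f\mapsto h^*(X\to Y)$, equipped with suitable product, pushforward, and pullback operations, satisfies all seven Fulton--MacPherson axioms together with Units. The tools I would draw on are standard for a multiplicative cohomology theory $h^*$: homotopy invariance, excision, the suspension/Thom isomorphism for trivial bundles (where multiplicativity enters), and the external product.

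For (a), I would proceed in two steps. First, a \emph{stabilization lemma}: if $\phi':=(\phi,0):X\to\mathbb{R}^{n+k}$ is the stabilization of $\phi$, then exterior product with the canonical generator of $h^k(\mathbb{R}^k,\mathbb{R}^k\setminus\{0\})$ (the Thom class of the trivial rank-$k$ bundle over a point) induces a canonical isomorphism between the groups attached to $\Phi$ and $\Phi'$. Second, given two embeddings $\phi_1:X\to\mathbb{R}^{n_1}$ and $\phi_2:X\to\mathbb{R}^{n_2}$, I would form the combined closed embedding $(f,\phi_1,\phi_2):X\to Y\times\mathbb{R}^{n_1+n_2}$ and use the straight-line family $\Phi^t:=(f,\phi_1,t\phi_2)$; each $\Phi^t$ remains a closed embedding since $\phi_1$ alone already embeds $X$ closedly. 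Homotopy invariance applied to this family of pairs equates the group for $(f,\phi_1,\phi_2)$ with the one for $(f,\phi_1,0)$, which by stabilization is identified with the group for $\Phi_1$. A symmetric argument yields identification with the group for $\Phi_2$, and transitivity gives the desired independence.

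For (b), the three operations are built as follows. \textbf{Pullback} along an independent square with base change $g:Y'\to Y$ is the cohomological pullback of the map of pairs induced by $g\times\op{id}_{\mathbb{R}^n}$. \textbf{Product} for $X\xrightarrow{f}Y\xrightarrow{g}Z$ uses the composite closed embedding $x\mapsto(gf(x),\psi f(x),\phi(x))$ of $X$ into $Z\times\mathbb{R}^{m+n}$; given $\alpha$ over $f$ and $\beta$ over $g$, form the external product $\beta\times\alpha$ in relative cohomology and restrict along the closed map of pairs induced by $\Psi\times\op{id}_{\mathbb{R}^n}:Y\times\mathbb{R}^n\hookrightarrow Z\times\mathbb{R}^{m+n}$. \textbf{Pushforward} along a proper $f:X\to Y$ rests on the observation that properness forces the image $\widetilde\Phi(X)\subset\Psi(Y)\times\mathbb{R}^n$ to be closed in $Z\times\mathbb{R}^{m+n}$; the inclusion of complements then gives a restriction in relative $h^*$, which postcomposed with the Thom isomorphism identifies the target with $h^*(Y\to Z)$. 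The unit $1_X\in h^0(X\to X)$ is the image of $1\in h^0(X)$ under the Thom isomorphism for $X\times\mathbb{R}^n\to X$.

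The seven axioms then reduce to diagram chases. Associativity and graded commutativity of the external product yield $A_1$; functoriality of cohomology restriction and of inclusion-pushforward yield $A_2$ and $A_3$; the compatibility axioms $A_{12}$, $A_{13}$, $A_{23}$ follow from naturality of the external product and its compatibility with restrictions and with the Thom/suspension isomorphisms; and $A_{123}$ is the projection formula for relative cohomology of pairs. I expect the main obstacle to be the \emph{coherence} part of (a): not merely exhibiting some isomorphism between any two embedding-defined groups, but checking that the resulting system of identifications is canonical, so that $h^*(X\to Y)$ really is a well-defined invariant of $f$ alone and the operations of (b) are likewise independent of the auxiliary embeddings. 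Once this coherence is pinned down, the rest of the construction and axiom-checking is routine functoriality in $h^*$.
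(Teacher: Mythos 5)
This theorem is quoted in the paper without proof, citing \cite[pp.~34--38]{FM}; your sketch reconstructs Fulton--MacPherson's own argument along the standard lines (stabilize two embeddings into a common ambient Euclidean factor, compare them by a straight-line concordance together with the Thom isomorphism for trivial bundles, then build product, pushforward and pullback from the external/cup product and the suspension isomorphism, and verify the seven axioms by naturality), and you are right that the substantive point is the \emph{coherence} of the identifications rather than the mere existence of some isomorphism.

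One concrete slip in the pushforward paragraph: you attribute to properness of $f$ the fact that $\widetilde\Phi(X)\subset\Psi(Y)\times\mathbb{R}^n$ is closed in $Z\times\mathbb{R}^{m+n}$. This closedness has nothing to do with properness. Once $\phi$ is chosen so that $(f,\phi):X\to Y\times\mathbb{R}^n$ is a closed embedding, and once $\Psi=(g,\psi):Y\to Z\times\mathbb{R}^m$ is a closed embedding, the map $\widetilde\Phi=(\Psi\times\op{id}_{\mathbb{R}^n})\circ(f,\phi)$ is a composition of closed embeddings, hence a closed embedding, for an arbitrary continuous $f$. Properness of $f$ is not what makes the map-level construction (restriction of supports followed by the Thom/suspension isomorphism) go through — that works for all $f$. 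It is instead the condition that qualifies $f$ as a \emph{confined} map in this topological instance of the axioms, so that the induced map on the covariant theory $\mathbb{B}_*(X)=h^*(X\to pt)$ (Borel--Moore homology) agrees with the usual proper pushforward, and so that the compatibility axioms $A_{12}$, $A_{23}$, $A_{123}$ continue to hold under base change. Correcting this justification does not change the architecture of your argument, but as stated it misattributes the role of properness. I would also flag that the product step is a bit more delicate than ``form $\beta\times\alpha$ and restrict along $\Psi\times\op{id}_{\mathbb{R}^n}$'': there is no map of ambient spaces from $Z\times\mathbb{R}^{m+n}$ to $(Z\times\mathbb{R}^m)\times(Y\times\mathbb{R}^n)$, so one must route through the closed embedding $\Psi\times\op{id}_{\mathbb{R}^n}:Y\times\mathbb{R}^n\hookrightarrow Z\times\mathbb{R}^{m+n}$ together with an excision and a cup product with $\mathrm{pr}^*\beta$, which is precisely where multiplicativity of $h^*$ and the good embeddability hypotheses on the spaces are used.
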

\begin{rem} 
\begin{enumerate}
\item By the definition (\ref{embed}) we have
$h^i(X \xrightarrow {\op{id}_X} X) = h^i(X).$
Indeed, since $\op{id}_X: X \to X$ is obviously a closed map (embedding), we can choose $\phi:X \to pt$ so that
$\Phi=(\op{id}_X, \phi):X \to X \times pt \cong X$ is a closed map. Hence 
we have 
$$h^i(X \xrightarrow {\op{id}_X} X)= h^i(X, (X \times pt) \setminus \Phi(X))=h^i(X, \emptyset)=h^i(X).$$
\item $h^{-i}(X \xrightarrow {a_X} pt) = h^{n-i}(\mathbb R^n, \mathbb R^n \setminus \Phi(X))=:h^{n-i}(\mathbb R^n, \mathbb R^n \setminus X)$ where $\Phi=(a_X, \phi):X \to pt \times \mathbb R^n =\mathbb R^n$ is a closed embedding. If $h^*=H^*$ is the singular cohomology, then $h^{-i}(X \xrightarrow {a_X} pt) =h^{n-i}(\mathbb R^n, \mathbb R^n \setminus X) =H^{n-i}(\mathbb R^n, \mathbb R^n \setminus X) =:H^{BM}_i(X)$ is the Borel--More homology group (e.g., see \cite{Ful}, \cite[B.1]{PS}).
\end{enumerate}
\end{rem}
\begin{defn}
(\cite[\S 2.7 Grothendieck transformation]{FM}) \label{groth}
Let $\bB, \bB'$ be two bivariant theories on a category $\mathscr C$. 
A {\it Grothendieck transformation} from $\bB$ to $\bB'$, $\ga : \bB \to \bB'$
is a collection of homomorphisms
$\bB(X \to Y) \to \bB'(X \to Y)$
for a morphism $X \to Y$ in the category $\mathscr C$, which preserves the above three basic operations: 
\begin{enumerate}
\item $\ga (\alp \bullet_{\bB} \be) = \ga (\alp) \bullet _{\bB'} \ga (\be)$, 
\item $\ga(f_{*}\alp) = f_*\ga (\alp)$, and 
\item $\ga (g^* \alp) = g^* \ga (\alp)$. 
\end{enumerate}
\end{defn}

A Grothendieck transformation $\ga: \bB \to \bB'$ induces natural transformations $\ga_*: \bB_* \to \bB_*'$ and $\ga^*: \bB^* \to {\bB'}^*$.
 
\begin{rem}(see \cite[\S 3.2 Grothendieck transformations of topological theories]{FM}) \label{contra}
Let $t: h^* \to \widetilde h^*$ be a natural transformation of two multiplicative cohomology theories. Then we get the associated Grothendieck transformation
\begin{equation}\label{bivari-h}
t:h^*( X \xrightarrow f Y) \to \widetilde h^*( X \xrightarrow f Y) 
\end{equation}
since we have $t: h^{*+n}(Y \times \mathbb R^n, Y \times \mathbb R^n \setminus \Phi(X)) \to \widetilde h^*(Y \times \mathbb R^n, Y \times \mathbb R^n \setminus \Phi(X))$. For example, the Chern character
$ch: K^0(-) \to H^*(-) \otimes \mathbb Q$ induces the Grothendieck transformation
$ch: K^0(X \xrightarrow f Y) \to H^*(X \xrightarrow f Y) \otimes \mathbb Q$ (see \cite[\S 3.2.2 Examples]{FM}).
\end{rem}
For later use, we introduce the following ``image'' of a Grothendieck transformation $\ga: \mathbb B \to \mathbb B'$:
$$\op{Im} \ga := \op{Image}(\ga: \mathbb B \to \mathbb B'),$$
which is defined by, for a map $f:X \to Y$,
$$(\op{Im}\ga) (X \xrightarrow f Y):=  \op{Image}\left (\ga: \mathbb B(X \xrightarrow f Y)  \to \mathbb B'(X \xrightarrow f Y) \right ) \subset \mathbb B'(X \xrightarrow f Y).$$
Then it is easy to see that $\op{Im}\ga$ is a bivariant subtheory of $\mathbb B'$. This in fact follows from the above three properties: (1) $\ga (\alp \bullet_{\bB} \be) = \ga (\alp) \bullet _{\bB'} \ga (\be)$, (2) $\ga(f_{*}\alp) = f_*\ga (\alp)$ and (3) $\ga (g^* \alp) = g^* \ga (\alp)$. Indeed, we have: 
\begin{enumerate}
\item The bivariant product $(\op{Im}\ga) (X \xrightarrow f Y) \otimes (\op{Im}\ga) (Y \xrightarrow g Z) \xrightarrow {\bullet} (\op{Im}\ga) (Y \xrightarrow {g \circ f}  Z)$ is well-defined, since the following diagram commutes because of $\ga (\alp \bullet_{\bB} \be) = \ga (\alp) \bullet _{\bB'} \ga (\be)$:
$$
\xymatrix
{
\mathbb B(X \xrightarrow f Y) \otimes \mathbb B(Y \xrightarrow g Z)   \ar[rr]^{\qquad \bullet_{\mathbb B}} \ar[d]_{\ga \otimes \ga} && \mathbb B(Y \xrightarrow {g \circ f}  Z) \ar[d]^{\ga}\\
(\op{Im}\ga) (X \xrightarrow f Y) \otimes (\op{Im}\ga) (Y \xrightarrow g Z)   \ar[rr]_{\qquad \bullet_{\mathbb B'}} && (\op{Im}\ga) (Y \xrightarrow {g \circ f}  Z).
}
$$
\item The pushforward $f_*: (\op{Im}\ga) (X \xrightarrow {g \circ f} Z) \to (\op{Im}\ga) (Y \xrightarrow g Z)$  is well-defined, since the following diagram commutes because of $f_*(\ga(\alp))=\ga(f_*\alp)$:
$$
\xymatrix
{
\mathbb B(X \xrightarrow {g \circ f} Z) \ar[rr]^{f_*} \ar[d]_{\ga} && \mathbb B(Y \xrightarrow g Z) \ar[d]^{\ga}\\
(\op{Im}\ga) (X \xrightarrow {g \circ f} Z) \ar[rr]_{f_*} && (\op{Im}\ga) (Y \xrightarrow g Z).
}
$$
\item The pullback $g^*: (\op{Im}\ga) (X \xrightarrow f  Y) \to (\op{Im}\ga) (X' \xrightarrow {f'} Y')$  is well-defined, since the following diagram commutes because of $g^*(\ga(\alp))=\ga(g^* \alp)$:
$$
\xymatrix
{
\mathbb B(X \xrightarrow f Y) \ar[rr]^{g^*} \ar[d]_{\ga} && \mathbb B(X' \xrightarrow {f'} Y') \ar[d]^{\ga}\\
(\op{Im}\ga) (X \xrightarrow f Y)) \ar[rr]_{g^*} && (\op{Im}\ga) (X' \xrightarrow {f'} Y').
}
$$
Here we consider the fiber square \quad 
$\CD
X' @> {g'} >> X \\
@V {f'}VV @VV {f}V\\
Y' @>> {g} > Y. \endCD
$
\end{enumerate}
Unless some confusion is possible, we may use the symbol $\op{Im}_{\ga} \mathbb B'$ for $\op{Im}\ga$ \emph{in order to emphasize that $\op{Im}\ga$ is a subtheory of $\mathbb B'$, thus recording $\mathbb B'$}.

\section{Operational bivariant theory}\label{obt}
Given a covariant functor or a homology theory, Fulton and MacPherson have defined what is called \emph{an operational bivariant theory} or \emph{an operational theory} \cite[\S 8 Operational Theories]{FM} (also, see \cite[\S 17. 1 and \S 17.2]{Ful}). As we remarked in Introduction, in this paper we consider the operational bivariant theory defined in \cite[\S 8.1]{FM}. The case of the other refined operational bivariant theories will be considered in a different paper in which we define
a co-operational bivariant theory ``corresponding" to these refined operational bivariant theories.

Let $h_*$ be a homology theory, i.e., a covariant functor with values in graded abelian groups such that the functorial (pushforward) homomorphism $f_*:h_*(X) \to h_*(Y)$ is defined for a confined map $f:X \to Y$.  Then its \emph{operational bivariant theory}, denoted by $\bB^{op}h_*(X \xrightarrow f Y)$, is defined as follows. For a map $f:X \to Y$, an element $c \in \bB^{op}h_*^i(X \xrightarrow f Y)$ is defined to be a collection of
homomorphisms\footnote{In \cite[\S 8 Operational Theories]{FM}, $c_g$ is denoted by $c_{Y'}$ and in \cite[\S 17. 1]{Ful} $c_g$ is denoted by $c^{(m)}_g$.}
\begin{equation*}
c_g:h_m(Y') \to h_{m-i}(X')
\end{equation*}
for all $m \in \mathbb Z$, all $g:Y' \to Y$ and the fiber square \, \, \, 
$$\CD
X' @> {g'}  >> X\\
@V {f'} VV @VV fV\\
Y' @>> {g} > Y.\\
\endCD
$$
These homomorphisms $c_g$ are
required to be \emph{compatible with pushforward} (for confined maps), i.e., for a fiber diagram
\, \, \, 
\begin{equation}\label{cd-123}
\CD
X'' @> {k'}  >> X' @> {g'}  >> X\\
@V {f''} VV @VV f'V @VV fV\\
Y'' @>> {k} > Y' @>> {g} > Y.\\
\endCD
\end{equation}
where $k$ is confined and thus $k'$ is confined as well, 
the following diagram commutes
\begin{equation}
\xymatrix
{h_{m-i}(X'')  \ar[rr]^{k'_*} && h_{m-i}(X')\\
h_m(Y'') \ar[u]^{c_{g\circ k}}  \ar[rr]_{k_*} && h_m(Y'). \ar[u]_{c_g}
}
\end{equation}
Here we recall the definitions of the three operational-bivariant-theoretic operations:
\begin{defn}\label{defn-op-biv}
\begin{enumerate}
\item {\bf Product}: The product
$$\bullet: \mathbb B^{op}h_*^i(X \xrightarrow f Y) \otimes \mathbb B^{op}h_*^j(Y \xrightarrow g Z) \to \mathbb B^{op}h_*^{i+j}(X \xrightarrow {g \circ f} Z)$$
is defined by, for $c \in \mathbb B^{op}h_*^i(X \xrightarrow f Y) $ and $d \in \mathbb B^{op}h_*^j(Y \xrightarrow g Z) $,
$$(c \bullet d)_h:= c_{h'} \circ d_h: h_m(Z') \xrightarrow {d_h}  h_{m-j}(Y') \xrightarrow {c_{h'}} h_{m-j-i}(X')=h_{m-(i+j)}(X')$$
Here we consider the following fiber squares:
\begin{equation}\label{cd-123v}
\CD
X' @> {h''} >> X \\
@V {f'}VV @VV {f}V\\
Y' @> {h'} >> Y \\
@V {g'}VV @VV {g}V \\
Z'  @>> {h} > Z. \endCD 
\end{equation}
\item {\bf Pushforward}: For $X \xrightarrow f Y \xrightarrow g Z$ with $f$ being confined 
$$f_*: \mathbb B^{op}h_*^i(X \xrightarrow {g \circ f} Z) \to \mathbb B^{op}h_*^i(Y \xrightarrow g Z)$$
is defined by, for $c \in \mathbb B^{op}h_*^i(X \xrightarrow {g \circ f} Z)$ 
$$(f_*c)_h := (f')_* \circ c_h: h_m(Z') \xrightarrow{c_h} h_{m-i}(X')  \xrightarrow{(f')_*} h_{m-i}(Y').$$
Here we use the above commutative diagram (\ref{cd-123v}).
\item {\bf Pullback}: For a fiber square 
$$\CD
X' @> {g'}  >> X\\
@V {f'} VV @VV fV\\
Y' @>> {g} > Y,\\
\endCD
$$
$$g^*: \mathbb B^{op}h_*^i(X \xrightarrow f Y ) \to \mathbb B^{op}h_*^i(X' \xrightarrow {f'} Y' )$$
is defined by, for $c \in \mathbb B^{op}h_*^i(X \xrightarrow f Y) $
$$(g^*c)_k := c_{g \circ k}: h_m(Y'') \to h_{m-i}(X'').$$
Here we use the above commutative diagrams (\ref{cd-123}).
\end{enumerate}
\end{defn}

Let $\bB$ be a bivariant theory. Then its \emph{associated operational bivariant theory} $\bB^{op}$ is defined to be the operational bivariant theory constructed from the covariant functor $\bB_*(X) = \bB(X \to pt)$. If we use the above notation $\bB^{op}h_*(X \xrightarrow f Y)$, to be more precise, we have
$$\bB^{op}(X \xrightarrow f Y):= \bB^{op}\bB_*(X \xrightarrow f Y).$$
Then we have the following canonical Grothendieck transformation
$$ op:\bB \to \bB^{op}$$
defined by, for each $\alp \in \bB(X \to Y)$,
$$
op(\alp):= \{ (g^*\alp) \bullet : \bB_*(Y') \to \bB_*(X') | g:Y' \to Y \}.
$$
Here we note that $(op(\alp))_g = (g^*\alp)^*:\bB_*(Y') \to \bB_*(X')$, which is defined by 
$(g^*\alp)^*(b):= (g^*\alp) \bullet b$,
for which see \cite[\S 2.5 Gysin homomorphisms]{FM}.

For the sake of convenience for the reader and later presentation, we prove the following:
\begin{pro}\label{map op} The above map $ op:\bB \to \bB^{op}$ is a Grothendieck transformation.
\end{pro}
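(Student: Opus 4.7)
The plan is to verify the three defining properties of a Grothendieck transformation (preservation of product, pushforward, pullback) by direct application of the Fulton--MacPherson axioms, after first confirming that $op(\alpha)$ is actually a legitimate element of $\mathbb B^{op}(X\xrightarrow f Y)$.

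First I would check well-definedness: for $\alpha \in \mathbb B(X\xrightarrow f Y)$, the collection $\{(g^*\alpha)\bullet\,{-}\,: \mathbb B_*(Y')\to\mathbb B_*(X')\}_{g:Y'\to Y}$ must be compatible with pushforward in the sense required in the definition of $\mathbb B^{op}$. Given a fiber diagram as in (\ref{cd-123}) with $k$ confined, and $b\in\mathbb B_*(Y'')$, I need $k'_*\bigl((g\circ k)^*\alpha\bullet b\bigr) = (g^*\alpha)\bullet k_*b$. By functoriality of pullback ($A_3$), $(g\circ k)^*\alpha = k^*(g^*\alpha)$, and then the projection formula ($A_{123}$) applied to the fiber square with top arrow $k'$ gives exactly the required identity. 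So $op(\alpha)\in\mathbb B^{op}(X\to Y)$.

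Next I would verify the three Grothendieck-transformation axioms of Definition~\ref{groth}, each a one-line axiom application. For \emph{pullback preservation}, using the fiber-square decomposition and unfolding the definitions, $(g^*op(\alpha))_k(c) = (op(\alpha))_{g\circ k}(c) = (g\circ k)^*\alpha \bullet c$, while $(op(g^*\alpha))_k(c) = k^*(g^*\alpha)\bullet c$, and these agree by $A_3$. For \emph{pushforward preservation} with $f$ confined and $\alpha\in\mathbb B(X\xrightarrow{g\circ f}Z)$, I unfold: $(f_*op(\alpha))_h(c) = f'_*\bigl(h^*\alpha\bullet c\bigr)$ by Definition~\ref{defn-op-biv}(2), while $(op(f_*\alpha))_h(c) = h^*(f_*\alpha)\bullet c$, and $A_{12}$ together with $A_{23}$ identifies these. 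For \emph{product preservation} with $\alpha\in\mathbb B(X\xrightarrow f Y)$, $\beta\in\mathbb B(Y\xrightarrow g Z)$ and $h:Z'\to Z$ inducing fiber squares as in (\ref{cd-123v}), I compute
\[
(op(\alpha\bullet\beta))_h(c) = h^*(\alpha\bullet\beta)\bullet c = (h''^{\,*}\alpha\bullet h'^{\,*}\beta)\bullet c
\]
by $A_{13}$, then associativity ($A_1$) rewrites this as $h''^{\,*}\alpha\bullet(h'^{\,*}\beta\bullet c) = (op(\alpha))_{h'}\bigl((op(\beta))_h(c)\bigr) = (op(\alpha)\bullet op(\beta))_h(c)$ by Definition~\ref{defn-op-biv}(1).

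There is no real obstacle here; every step is a direct invocation of one of the bivariant axioms, and the only thing demanding care is bookkeeping of the two stacked fiber squares (\ref{cd-123v}) so that the pullbacks $h^*$, $h'^{\,*}$, $h''^{\,*}$ are unambiguously identified when applying $A_{13}$ and $A_{23}$. The unit condition $op(1_X) = 1_X^{op}$ (if desired for completeness) follows because $(g^*1_X)\bullet b = 1_{X'}\bullet b = b$ by the unit axiom, so $op(1_X)_g = \op{id}_{\mathbb B_*(X')}$, which is the identity of $\mathbb B^{op}$.
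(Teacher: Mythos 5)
Your proof is essentially identical to the paper's: you verify product, pushforward, and pullback preservation by exactly the same axiom invocations ($A_{13}$ then $A_1$; $A_{23}$ then $A_{12}$; $A_3$), with the useful additions of the well-definedness check for $op(\alpha)$ (via $A_3$ and the projection formula $A_{123}$) and the unit axiom, which the paper's proof omits. One notational slip in the product step: with the labeling of diagram (\ref{cd-123v}) and the bottom-map convention for pullback superscripts, $A_{13}$ gives $h^*(\alpha\bullet\beta)=(h')^*\alpha\bullet h^*\beta$, not $h''^{*}\alpha\bullet h'^{*}\beta$ --- the intent is clear and the rest of the chain is correct.
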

\begin{proof}
\begin{enumerate}
\item $op(\alp \bullet_{\bB} \beta) =op(\alp) \bullet_{\bB^{op}} op(\beta)$  for $\alp \in \bB(X \xrightarrow f  Y)$ and $\beta \in \bB(Y \xrightarrow g  Z)$. From now we simply write $op(\alp \bullet \beta) =op(\alp) \bullet op(\beta)$. Then for $z' \in \bB_*(Z') = \bB(Z' \to pt)$, considering the above diagram (\ref{cd-123v}), we have 
\begin{align*}
(op(\alp \bullet \beta))_h (z') & = h^*(\alp \bullet \be) \bullet z' \quad \text{(by definition)}\\
& = ((h')^*\alp \bullet h^*\be) \bullet z'  \quad \text{(by Axiom ($A_{13}$))}\\
& = (h')^*\alp \bullet (h^*\be \bullet z' ) \quad \text{(by associativity of product, i.e., Axiom ($A_1$))}\\
& =  (h')^*\alp \bullet ((op(\be))_h(z') )  \quad \text{(by definition)}\\
& =  (op(\alp))_{h'}((op(\be))_h(z')) \quad \text{(by definition)}\\
& =  ((op(\alp))_{h'} \circ (op(\be))_h) (z')  \\
& = (op(\alp) \bullet op(\be))_h (z')  \, \, \text{(by definition of \emph{the operational bivariant product}).}
\end{align*}
Hence we have $(op(\alp \bullet \beta))_h  =  (op(\alp) \bullet op(\be))_h $, i.e., $op(\alp \bullet \beta) =op(\alp) \bullet op(\beta)$.
\item $op(f_*\alp) = f_*op(\alp)$, where we consider the above diagram (\ref{cd-123v}) and $\alp \in \bB(X \xrightarrow {g \circ f} Z)$. For $z' \in \bB_*(Z')=\bB(Z' \to pt)$ we have
\begin{align*}
(op(f_*\alp))_h (z') & = h^*(f_*\alp) \bullet z' \quad \text{(by definition)}\\
& =  ((f')_*h^*\alp) \bullet z'  \quad \text{(by Axiom ($A_{23}$))}\\
& =  (f')_* (h^*\alp \bullet z') \quad \text{(by  Axiom ($A_{12}$))}\\
& =  (f')_* ((op(\alp))_h(z'))  \quad \text{(by definition)}\\
& =  ((f')_* \circ (op(\alp))_h )(z') \\
& = (f_*op(\alp))_h (z')  \,\, \text{(by definition of \emph{the operational bivariant pushforward} )}
\end{align*}.
Hence we have $(op(f_*\alp))_h = (f_*op(\alp))_h$, i.e., $op(f_*\alp)  = f_*op(\alp)$.
\item $op(g^*\alp)=g^*op(\alp)$ for $\alp \in \bB(X \xrightarrow f Y)$. Consider the above commutative diagram (\ref{cd-123}) and let $y'' \in \bB_*(Y'')=\bB(Y'' \to pt)$.
\begin{align*}
(op(g^*\alp))_k(y'') & = (k^*(g^*\alp))\bullet y'' \\
& = (g \circ k)^*\alp \bullet y''  \quad \text{(by the functoriality of pullback, i.e., Axiom ($A_3$))}\\
& = (op(\alp))_{g \circ k} (y'') \quad \text{(by definition)}\\
& = (g^*op(\alp))_k (y'') \, \, \text{(by definition of \emph{the operational bivariant pullback}).}
\end{align*}
Hence we have $(op(g^*\alp))_k = (g^*op(\alp))_k$, i.e., $op(g^*\alp) = g^*op(\alp).$
\end{enumerate}
\end{proof}
\begin{rem}\label{3-rem-bb} In a sense, the operational bivariant product, pushforward and pullback are defined like that in Definition \ref{defn-op-biv} above so that the above map $ op:\bB \to \bB^{op}$ becomes a Grothendieck transformation.
\end{rem}

It is not clear whether one could construct a Grothendieck transformation of the associated operational bivariant theories from a natural transformation of two covariant functors. To be more precise, if $t :h_*(-) \to \widetilde h_*(-)$ is a natural transformation of two covariant functors, then it is not clear whether one could construct a Grothendieck transformation $t:\bB^{op}h_*(X \xrightarrow f Y) \to \bB^{op}\widetilde h_*(X \xrightarrow f Y)$, which is an ``operational bivariant theoretic analogue" of (\ref{bivari-h}). A kind of similar problem is discussed in \cite[\S 8.2]{FM}. Suppose that $\bB$ is a bivariant theory, $h_*$ is a covariant functor and there are homomorphisms $\phi(X): \bB_*(X)=\bB(X \to pt) \to h_*(X)$, covariant for confined maps, and taking $1 \in \bB^*(pt)=\bB_*(pt)$ to $1 \in h_*(pt)$. 
The homomorphism $ev_X:\bB^{op}h_*(X \to pt) \to h_*(X)$ defined by $ev_X(c) := (c_{\op{id}_{pt}})(1)$ where $1 \in h_*(pt)$ is called the evaluation homomorphism. Then a question is whether there exists a unique Grothendieck transformation $\Phi:\bB(X \to Y) \to \bB^{op}h_*(X \to Y)$ such that the associated map $\Phi(X): \bB_*(X) \to \bB^{op}h_*(X \to Y)$ followed by the evaluation map $ev_X:\bB^{op}h_*(X \to pt) \to h_*(X)$, i.e., $ev_X \circ \Phi(X)$, is equal to the given homomorphism $\phi(X):\bB_*(X) \to h_*(X)$. The answer to this question is negative, however a positive answer to a modified question is affirmative, details for which see \cite{Yokura-ope} (cf. \cite{BSY-jpaa}). 

Let $\ga:\bB \to \bB'$ be a Grothendieck transformation between two bivariant theories $\bB$ and $\bB'$. Then we have a natural transformation $\ga_*:\bB_* \to \bB_*'$ between two covariant functors $\bB_*$ and $\bB'_*$. As observed above, it is not clear whether one could construct a Grothendieck transformation $t:\bB^{op}\bB_*(X \xrightarrow f Y) \to \bB^{op}\bB'_*(X \xrightarrow f Y)$, i.e, $t:\bB^{op}(X \xrightarrow f Y) \to \bB'^{op}(X \xrightarrow f Y)$. However, 
if we take smaller subtheories of both $\mathbb B^{op}$ and $\mathbb B'^{op}$, then we can show the following results.
\begin{pro}\label{pro-op} Let $\bB$ be a bivariant theory and $\bB^{op}$ be its associated operational bivariant theory.
\begin{enumerate}
\item Define
$\widehat{\bB}^{op}:= \op{Image}(op:\bB \to \bB^{op}),$
that is 
$$\widehat{\bB}^{op}(X \to Y) := op(\bB(X \to Y))  = \left \{op(\alp) \, \, | \, \, \alp \in \bB(X \to Y) \right \} \subset \bB^{op}(X \to Y).$$
Then $\widehat{\bB}^{op}$ is an operational bivariant theory, which is a subtheory of $\bB^{op}$, and 
$\widehat{op}: \bB \to \widehat{\bB}^{op}$ defined by $\widehat{op}(\alp) = op(\alp)$ and the inclusion 
$\iota^{op}_{\bB}:\widehat{\bB}^{op} \hookrightarrow \bB^{op}$ are Grothendieck transformations.
\item \label{X-to-p-op} For a map to a point, $X \to pt$,  we have the isomorphism:
$$\bB_*(X) = \bB(X \to pt) \cong \widehat{\bB}^{op} (X \to pt).$$
\end{enumerate}
\end{pro}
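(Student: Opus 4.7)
The plan is to reduce part (1) to the general fact (already established in the excerpt) that the image of a Grothendieck transformation is a bivariant subtheory, and to prove part (2) by producing an explicit two-sided inverse to $\widehat{op}$ via evaluation at the multiplicative unit $1_{pt}$.

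For part (1), Proposition \ref{map op} shows that $op:\bB \to \bB^{op}$ is a Grothendieck transformation. In Section \ref{BT}, just before Section \ref{obt}, it was verified that for any Grothendieck transformation $\ga:\mathbb B \to \mathbb B'$ the image $\op{Im}\ga$ is a bivariant subtheory of $\mathbb B'$, the product, pushforward and pullback all restricting well to the image precisely because of the three compatibility properties defining a Grothendieck transformation. Applying this observation to $\ga = op$, we see that $\widehat{\bB}^{op} = \op{Im}\, op$ is a bivariant subtheory of $\bB^{op}$, whose three operations are the operational bivariant product, pushforward and pullback inherited from $\bB^{op}$. The map $\widehat{op}$ is just $op$ with codomain restricted to $\widehat{\bB}^{op}$, and $\iota^{op}_{\bB}:\widehat{\bB}^{op} \hookrightarrow \bB^{op}$ is the inclusion; both preserve the three operations tautologically, hence are Grothendieck transformations.

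For part (2), surjectivity of $\widehat{op}:\bB(X \to pt) \to \widehat{\bB}^{op}(X \to pt)$ is immediate from the definition of $\widehat{\bB}^{op}$. For injectivity I would construct an explicit left inverse. The identity base change $\op{id}_{pt}:pt \to pt$ gives a fiber square with $X' = X$ and $g' = \op{id}_X$, so each $c \in \widehat{\bB}^{op}(X \to pt)$ carries a homomorphism $c_{\op{id}_{pt}}:\bB_*(pt) \to \bB_*(X)$. Define $ev(c):= c_{\op{id}_{pt}}(1_{pt})$, and for $\alp \in \bB(X \to pt)$ compute
$$ev(\widehat{op}(\alp)) \; = \; (op(\alp))_{\op{id}_{pt}}(1_{pt}) \; = \; \op{id}_{pt}^*\alp \bullet 1_{pt} \; = \; \alp \bullet 1_{pt} \; = \; \alp,$$
using the definition of $op$, the fact that pullback along an identity is the identity (a consequence of functoriality $(A_3)$ applied to the identity square), and the units axiom.

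The only point requiring a moment's care is the last displayed computation, specifically the identity $\op{id}_{pt}^*\alp = \alp$ combined with $\alp \bullet 1_{pt} = \alp$; once this is in hand, injectivity follows at once, giving the desired isomorphism $\bB_*(X) \cong \widehat{\bB}^{op}(X \to pt)$. Apart from this mild verification, the proposition is a formal consequence of Proposition \ref{map op} and the $\op{Im}\ga$ construction recalled in Section \ref{BT}.
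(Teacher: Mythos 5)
Your proof is correct and follows essentially the same route as the paper, which also delegates part (1) to Proposition \ref{map op} together with the $\op{Im}\ga$ construction of Section \ref{BT}, and proves part (2) by exactly the computation you give: it shows $op(\alp) = op(\be) \Rightarrow \alp = \be$ by evaluating $(\op{id}_{pt}^*\alp)^*$ at $x = 1_{pt}$, which is precisely an application of your left inverse $ev(c) = c_{\op{id}_{pt}}(1_{pt})$. One small nitpick: $\op{id}_{pt}^*\alp = \alp$ does not literally follow from $(A_3)$ alone --- functoriality of pullback only gives idempotence of $\op{id}^*$ --- so this is really a standard implicit convention for bivariant theories, one the paper's own proof also uses without comment.
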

\begin{proof}
 Here we show only (\ref{X-to-p-op}). Let $\alp, \be \in \bB(X \to pt)$. It suffices to show that $op(\alp) =op(\be)$ implies $\alp = \be$, i.e., $(g^*\alp)^* =(g^*\be)^*$ for all $g:Y \to pt$ implies that $\alp = \be$. Let us consider the case $\op{id}_{pt}:pt \to pt$. Then we are supposed to have $(\op{id}_{pt}^*\alp)^* =(\op{id}_{pt}^*\be)^*$, i.e., $(\op{id}_{pt}^*\alp)^* (x) =(\op{id}_{pt}^*\be)^*(x)$, i.e.,
$$\alp \bullet x =(\op{id}_{pt}^*\alp)^* (x)  = (\op{id}_{pt}^*\be)^*(x) = \be \bullet x$$
 for any $x \in \bB_*(pt)=\bB(pt \to pt)$. In particular, let $x$ be the unit $1_{pt} \in \bB_0(X)=\bB^0(pt \xrightarrow {\op{id}_{pt}} pt)$, then we have
 $$ \alp = \alp \bullet  1_{pt} = \be \bullet 1_{pt} = \be.$$
 Here the left and right equalities are due to the property of the unit (see ({\bf Units}) in \S 2, or \cite[\S 2.2 Axioms for a bivariant theory, p. 22]{FM}.) 
 Therefore we have that $op(\alp) =op(\be)$ implies $\alp = \be$.
 \end{proof} 
Using this subtheory $\widehat{\bB}^{op}$, given a Grothendieck transformation $\ga: \bB \to \bB'$, one might be tempted to (quite naturally) think that there would be a canonical Grothendieck transformation $\widehat{\ga}^{op}: \widehat{\bB}^{op} \to \widehat{\bB'}^{op}$ such that the following diagram commutes:
\[
\xymatrix
{
\alp \in \bB (X \to Y)  \ar[rr]^{\ga} \ar[d]_{\widehat{op}} && \quad \bB' (X \to Y) \ni \ga(\alp) \ar[d]^{\widehat{op}} \\
op(\alp) \in \widehat{\bB}^{op}  (X \to Y) \hspace{0.6cm} \ar[rr]_{\widehat{\ga}^{op}} && \hspace{0.6cm} \widehat{\bB'}^{op} (X \to Y) \ni op(\ga(\alp))
}
\]
where $\widehat{\ga}^{op}$ is 
defined by $\widehat{\ga}^{op} (op(\alp)):=op(\ga(\alp))$. Then it follows from the above Proposition \ref{map op} that 
$\widehat{\ga}^{op}: \widehat{\bB}^{op} \to \widehat{\bB'}^{op}$ is a Grothendieck transformation. However, the definition of $\widehat{\ga}^{op} (op(\alp)):=op(\ga(\alp))$ turns out to be \emph{not well-defined} due to the following reason. 
Let $\alp, \beta \in \bB(X \to Y)$ such that $op(\alp)=op(\be)$. In order for the above definition $\widehat{\ga}^{op} (op(\alp)):=op(\ga(\alp))$ to be well-defined, we should have that $op(\ga(\alp))=op(\ga(\beta))$, but this cannot be automatically guaranteed, although it is the case for $\alp, \be \in  \bB(X \to pt)$ as shown in the proof of Proposition \ref{pro-op} (2). 
Indeed, we get the following commutative diagram:
\[
\xymatrix
{
\bB_*(X') \ar[rr]^{\ga} && \bB'_*(X') \\
\bB_*(Y') \ar[u]^{(g^*\alp)^*} \ar[rr]_{\ga} && \bB'_*(Y'), \ar[u]_{(g^*(\ga(\alp)))^*} 
}
\quad 
\xymatrix
{
\bB_*(X') \ar[rr]^{\ga} && \bB'_*(X') \\
\bB_*(Y') \ar[u]^{(g^*\be)^*} \ar[rr]_{\ga} && \bB'_*(Y'). \ar[u]_{(g^*(\ga(\be)))^*} 
}
\]
which is due to the following computation, e.g., for the first diagram: for $\forall y \in \bB_*(Y')$ 
$$\ga((g^*\alp)^*(y)) = \ga(g^*\alp \bullet y) = \ga(g^*\alp) \bullet \ga(y) = g^*(\ga(\alp))\bullet \ga(y) = (g^*(\ga(\alp)))^*(\ga(y)).$$
Thus, we get $\ga  \circ (g^*\alp)^* = (g^*(\ga(\alp)))^* \circ \ga$. Similarly we get $\ga  \circ (g^*\be)^* = (g^*(\ga(\be)))^* \circ \ga$. Therefore $op(\alp) =op(\be)$ implies that $(g^*(\ga(\alp)))^* \circ \ga = \ga  \circ (g^*\alp)^* = \ga  \circ (g^*\be)^* = (g^*(\ga(\be)))^* \circ \ga$, namely we have
$$(g^*(\ga(\alp)))^* \circ \ga = (g^*(\ga(\be)))^* \circ \ga, $$
which \emph{does not necessarily} imply that $(g^*(\ga(\alp)))^* = (g^*(\ga(\be)))^*$. 
\emph{However, if $\ga: \bB_*(Y') \to \bB'_*(Y')$ is surjective, then we do have $(g^*(\ga(\alp)))^* = (g^*(\ga(\be)))^*$, namely we have
$$op(\ga(\alp)) = op(\ga(\be)).$$
}
Therefore we get the following:
\begin{pro}\label{im-ga}  Let $\ga: \bB \to \bB'$ be a Grothendieck transformation. Then there is a canonical Grothendieck transformation
$$\widehat{\ga}^{op}: \widehat{\bB}^{op} \to \widehat{\op{Im}_{\ga}\bB'}^{op}$$
which is defined by $\widehat{\ga}^{op}(op(\alp)):= op(\ga(\alp))$. Hence, the following is a commutative diagram of Grothendieck transformations:
\[
\xymatrix
{
\bB \ar[rr]^{\ga} \ar[d]_{\widehat{op}} && \op{Im}_{\ga}\bB' \ar[d]^{\widehat{op}} \\
\widehat{\bB}^{op} \ar[rr]_{\widehat{\ga}^{op}} && \widehat{\op{Im}_{\ga}\bB'}^{op}. 
}
\]
\end{pro}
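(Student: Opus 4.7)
The plan is to reduce the statement to two things: (i) the well-definedness of $\widehat{\ga}^{op}$ on $\widehat{\bB}^{op}$, and (ii) the verification of the three Grothendieck transformation axioms, which follow formally from the fact that both $op$ (by Proposition \ref{map op}) and $\ga$ are Grothendieck transformations. The commutativity of the displayed diagram is then immediate from the defining formula $\widehat{\ga}^{op}(op(\alp)):= op(\ga(\alp))$.

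For well-definedness, the task is: if $\alp, \be \in \bB(X \to Y)$ satisfy $op(\alp)=op(\be)$, then $op(\ga(\alp)) = op(\ga(\be))$ as elements of $\widehat{\op{Im}_{\ga}\bB'}^{op}(X\to Y)$. I will invoke directly the computation performed just before the proposition, which yields, for every $g:Y'\to Y$ with associated fiber square, the intertwining identities
$$\ga \circ (g^*\alp)^* = (g^*(\ga(\alp)))^* \circ \ga, \qquad \ga \circ (g^*\be)^* = (g^*(\ga(\be)))^* \circ \ga$$
as maps $\bB_*(Y') \to \bB'_*(X')$. The hypothesis $op(\alp)=op(\be)$ makes the two left-hand sides equal, giving
$$(g^*(\ga(\alp)))^* \circ \ga \;=\; (g^*(\ga(\be)))^* \circ \ga.$$
This is the step where the restriction of codomain to $\op{Im}_{\ga}\bB'$ becomes crucial: by the very definition of $\op{Im}_{\ga}\bB'$, the map $\ga \colon \bB_*(Y')\to (\op{Im}_{\ga}\bB')_*(Y')$ is \emph{surjective}, and both operations $(g^*(\ga(\alp)))^*$ and $(g^*(\ga(\be)))^*$ are defined on $(\op{Im}_{\ga}\bB')_*(Y')$. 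Hence surjectivity allows us to cancel $\ga$ on the right, yielding $(g^*(\ga(\alp)))^* = (g^*(\ga(\be)))^*$ as maps $(\op{Im}_{\ga}\bB')_*(Y') \to (\op{Im}_{\ga}\bB')_*(X')$ for every $g$, i.e., $op(\ga(\alp))=op(\ga(\be))$. This is precisely the point where the paper's remark ``\emph{However, if $\ga$ is surjective\ldots}'' is used; it is the main obstacle and the reason the codomain must be $\widehat{\op{Im}_{\ga}\bB'}^{op}$ rather than $\widehat{\bB'}^{op}$.

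Once $\widehat{\ga}^{op}$ is well-defined, the three Grothendieck transformation axioms are verified by short chains of equalities on elements of the form $op(\alp)\in \widehat{\bB}^{op}$, which exhaust $\widehat{\bB}^{op}$ by surjectivity of $\widehat{op}$. For the product, I will compute
$$\widehat{\ga}^{op}(op(\alp)\bullet op(\be)) = \widehat{\ga}^{op}(op(\alp\bullet\be)) = op(\ga(\alp\bullet\be)) = op(\ga(\alp)\bullet\ga(\be)) = op(\ga(\alp))\bullet op(\ga(\be)),$$
where the first and third equalities use that $op$ preserves products (Proposition \ref{map op}, applied to $\bB$ and to $\op{Im}_{\ga}\bB'$ respectively) and the middle equality uses that $\ga$ is a Grothendieck transformation. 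Exactly analogous three-step chains, using the corresponding clauses of Proposition \ref{map op} together with $\ga(f_*\alp)=f_*\ga(\alp)$ and $\ga(g^*\alp)=g^*\ga(\alp)$, give compatibility with pushforward and with pullback. Finally, the commutativity of the displayed diagram is just $\widehat{op}(\ga(\alp)) = op(\ga(\alp)) = \widehat{\ga}^{op}(op(\alp)) = \widehat{\ga}^{op}(\widehat{op}(\alp))$, and I will note in passing that the formal content of the proof is ``$\widehat{\ga}^{op}\circ\widehat{op} = \widehat{op}\circ\ga$,'' with $\widehat{\ga}^{op}$ being the unique factorization of the Grothendieck transformation $\widehat{op}\circ\ga$ through the surjection $\widehat{op}\colon \bB\to\widehat{\bB}^{op}$.
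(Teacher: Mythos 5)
Your proof is correct and follows essentially the same approach as the paper: the key step --- reducing well-definedness to the surjectivity of $\ga\colon \bB_*(Y')\to(\op{Im}_\ga\bB')_*(Y')$ via the intertwining identity $\ga\circ(g^*\alp)^* = (g^*\ga(\alp))^*\circ\ga$ --- is exactly the computation the paper carries out in the paragraph immediately preceding the proposition, and your choice of codomain $\widehat{\op{Im}_\ga\bB'}^{op}$ is the same fix for the surjectivity requirement. Your explicit three-step chains verifying the Grothendieck transformation axioms merely spell out what the paper summarizes by the phrase ``it follows from Proposition~\ref{map op}.''
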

In the above proposition we need the fact that 
$\ga:\bB_* \to (\op{Im}_{\ga}\bB')_* = \op{Image}(\ga:\bB_* \to \bB'_*)$ is surjective. So, we introduce the following notions.
\begin{defn}\label{co-surj} Let $\ga:\bB \to \bB'$ be a Grothendieck transformation. 
\begin{enumerate}
\item If the covariant functor part $\ga:\bB_* \to \bB'_*$ is surjective, i.e., $\ga:\bB_*(X) \to \bB'_*(X)$ is surjective for any $X$, then $\ga:\bB \to \bB'$ shall be called \emph{a ``covariant-surjective'' Grothendieck transformation}, abusing words.  
\item (for later use) If the contravariant functor part $\ga:\bB^* \to \bB'^*$ is surjective, i.e., $\ga:\bB^*(X) \to \bB'^*(X)$ is surjective for any $X$, then $\ga:\bB \to \bB'$ shall be called \emph{a ``contravariant-surjective'' Grothendieck transformation}, abusing words. 
\item If  $\ga:\bB_* \to \bB'_*$ and  $\ga:\bB^* \to \bB'^*$ are both surjective, $\ga:\bB \to \bB'$ shall be called \emph{a ``covariant+contravariant-surjective"} or \emph{``bi-surjective" Grothendieck transformation}.
\end{enumerate}
\end{defn}
\begin{cor}\label{cor-co-surj} Let $\ga:\bB \to \bB'$ be a ``covariant-surjective'' Grothendieck transformation. Then there is a canonical Grothendieck transformation
$$\widehat{\ga}^{op}: \widehat{\bB}^{op} \to \widehat{\bB'}^{op}$$
which is defined by $\widehat{\ga}^{op}(op(\alp)):= op(\ga(\alp)).$
\end{cor}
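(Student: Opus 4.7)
The corollary is essentially a direct consequence of Proposition \ref{im-ga} once the covariant-surjective hypothesis is imposed. My plan is to run the well-definedness argument from that proposition, observing how the surjectivity relaxes the target from $\widehat{\op{Im}_\ga \bB'}^{op}$ to $\widehat{\bB'}^{op}$, and then to invoke Proposition \ref{map op} to obtain the three Grothendieck-transformation compatibilities.

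\emph{Well-definedness of $\widehat{\ga}^{op}(op(\alp)) := op(\ga(\alp))$.} Given $\alp, \be \in \bB(X\to Y)$ with $op(\alp) = op(\be)$, the computation displayed just before Corollary \ref{cor-co-surj} yields, for each $g:Y'\to Y$, the equality
\[
(g^*\ga(\alp))^* \circ \ga \; = \; (g^*\ga(\be))^* \circ \ga
\]
as maps $\bB_*(Y') \to \bB'_*(X')$. At this point the covariant-surjective hypothesis enters: since $\ga: \bB_*(Y') \to \bB'_*(Y')$ is surjective, $\ga$ can be cancelled on the right, producing $(g^*\ga(\alp))^* = (g^*\ga(\be))^*$ as maps $\bB'_*(Y') \to \bB'_*(X')$ for every $g:Y' \to Y$. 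Hence $op(\ga(\alp)) = op(\ga(\be))$ in $\widehat{\bB'}^{op}(X \to Y)$, so the proposed assignment is well-defined.

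\emph{Grothendieck-transformation properties.} These follow by a routine chain of equalities that uses the fact that $\ga$ is a Grothendieck transformation by hypothesis and that $op$ is a Grothendieck transformation by Proposition \ref{map op}. For instance, for the product, $\widehat{\ga}^{op}(op(\alp)\bullet op(\be)) = \widehat{\ga}^{op}(op(\alp\bullet\be)) = op(\ga(\alp\bullet\be)) = op(\ga(\alp)\bullet \ga(\be)) = op(\ga(\alp))\bullet op(\ga(\be))$, and the pushforward and pullback compatibilities are verified in exactly the same mechanical way.

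The only step with genuine content is the cancellation in the well-definedness argument, and that is precisely where the covariant-surjective hypothesis is needed. Equivalently, one may derive the corollary formally from Proposition \ref{im-ga}: under covariant-surjectivity one has $(\op{Im}_\ga \bB')_* = \bB'_*$, so the operational bivariant theory $(\op{Im}_\ga \bB')^{op}$ literally coincides with $\bB'^{op}$ and $\widehat{\op{Im}_\ga \bB'}^{op}$ sits as a bivariant subtheory of $\widehat{\bB'}^{op}$; post-composing the Grothendieck transformation of Proposition \ref{im-ga} with this inclusion then produces the desired $\widehat{\ga}^{op}$.
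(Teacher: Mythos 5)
Your proof is correct and follows essentially the same route the paper intends: the paper does not write a separate proof of this corollary but leaves it as an immediate consequence of the displayed cancellation argument preceding Proposition~\ref{im-ga} together with the observation (stated just after that proposition) that only the covariant part of $\ga$ needs to surject. Your well-definedness step is exactly that cancellation argument, your verification of the three Grothendieck-transformation compatibilities via Proposition~\ref{map op} mirrors what the paper already invoked in the discussion preceding Proposition~\ref{im-ga}, and your closing remark — that covariant-surjectivity forces $(\op{Im}_\ga \bB')_* = \bB'_*$, hence $(\op{Im}_\ga \bB')^{op} = \bB'^{op}$, so Proposition~\ref{im-ga} lands in a subtheory of $\widehat{\bB'}^{op}$ — is a clean way to see the corollary as literally a special case of that proposition.
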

\begin{rem}In the above we use the fact that the covariant functor $\ga:\bB_* \to \bB'_*$ is surjective. So, instead of considering the surjective Grothendieck transformation $\ga:\bB \to \op{Im}_{\ga}\bB'$ in order to obtain $\widehat{\ga}^{op}: \widehat{\bB}^{op} \to \widehat{\op{Im}_{\ga}\bB'}^{op}$, one might be tempted to consider the following \emph{slightly modified} one:
$${}_*\bB'(X \xrightarrow f Y) = \begin{cases}
\op{Image}(\ga:\bB_*(X) \to \bB'_*(X)), & \text {if $f:X \to pt$}\\
\bB'(X \xrightarrow f Y),  & \text {otheriwse}
\end{cases}
$$
by which we would obtain $\widehat{\ga}^{op}: \widehat{\bB}^{op} \to \widehat{{}_*\bB'}^{op}$. Unfortunately ${}_*\bB'$ may not be a bivariant theory, to be more precise, the bivariant product may not be well-defined, although the pushforward and the pullback are well-defined.
\end{rem}
\section{Co-operational bivariant theory}\label{coop}
In this section, motivated by the above Fulton--MacPherson's operational bivariant theory, we consider another kind of operational bivariant theory associated to \emph{a contravariant functor} which may not be a multiplicative cohomology theory. This new bivariant theory shall be called \emph{a co-operational bivariant theory}.

If a contravariant functor $h^*$ is a multiplicative cohomology theory and $t: h^* \to \widetilde h^*$ is a natural transformation between two multiplicative cohomology theories, then, as remarked in Remark \ref{contra}, one can extend $t:h^* \to \widetilde h^*$ to a Grothendieck transformation $t:h^*(X \to Y) \to \widetilde h^*(X \to Y)$ between their associated bivariant theories in such a way that for an identity map $id_X:X \to X$, $t:h^*(X \xrightarrow {\op{id}_X} X) \to \widetilde h^*(X \xrightarrow {\op{id}_X} X)$ is equal to the original 
homomorphism $t:h^*(X) \to \widetilde h^*(X)$. If it is not multiplicative, one cannot make such a construction. For example, the Chern class $c: K^0(-) \to H^*(-;\bZ)$ is not a natural transformation between two multiplicative cohomology theories, as proved by B. Totaro \cite{Totaro} (cf. \cite{Groth} and \cite{Segal}).
Our co-operational bivariant theory is defined as follows:
\begin{thm}\label{coop-biv}(``Co-operational" bivariant theory)
Let $F^*(-)$ be a contravariant functor with values in the category of graded abelian groups, defined on a category $C$ with all fiber squares independent.
For each  
map $f:X \to Y$ an element
$$ c \in \mathbb B^{coop}F^i(X \xrightarrow f Y)$$
is defined to be a collection of 
homomorphisms
$$c_g: F^m(X') \to F^{m+i}(Y')$$
for all $m \in \mathbb Z$, all $g:Y' \to Y$  and the fiber square \, \, \, 
$$\CD
X' @> {g'}  >> X\\
@V {f'} VV @VV fV\\
Y' @>> {g} > Y.\\
\endCD
$$ \, \, \, 

And these homomorphisms $c_g$ are
required to be compatible with pullback, i.e., for a fiber diagram
\begin{equation}\label{cd-0}
\CD
X'' @> {h'}  >> X' @> {g'}  >> X\\
@V {f''} VV @VV f'V @VV fV\\
Y'' @>> {h} > Y' @>> {g} > Y.\\
\endCD
\end{equation}
the following diagram commutes
\begin{equation}\label{cd-01}
\xymatrix
{F^m(X'') \ar[d]_{c_{g \circ h}} && F^m(X')\ar[ll]_{(h')^*} \ar[d]^{c_g}\\
F^{m+i}(Y'') && F^{m+i}(Y') \ar[ll]^{h^*}.
}
\end{equation}
We define the following bivariant-theoretic operations:
\begin{enumerate}
\item {\bf Product}: The product
$$\bullet: \mathbb B^{coop}F^i(X \xrightarrow f Y) \otimes \mathbb B^{coop}F^j(Y \xrightarrow g Z) \to \mathbb B^{coop}F^{i+j}(X \xrightarrow {g \circ f} Z)$$
is defined by, for $c \in \mathbb B^{coop}F^i(X \xrightarrow f Y) $ and $d \in \mathbb B^{coop}F^j(Y \xrightarrow g Z) $,
$$(c \bullet d)_h:= d_h \circ c_{h'}: F^m(X') \xrightarrow {c_{h'}}  F^{m+i}(Y') \xrightarrow {d_h} F^{m+i+j}(Z')$$
Here we consider the following fiber squares:
\begin{equation}\label{cd-1}
\CD
X' @> {h''} >> X \\
@V {f'}VV @VV {f}V\\
Y' @> {h'} >> Y \\
@V {g'}VV @VV {g}V \\
Z'  @>> {h} > Z \endCD 
\end{equation}
\item {\bf Pushforward}: For $X \xrightarrow f Y \xrightarrow g Z$ 
$$f_*: \mathbb B^{coop}F^i(X \xrightarrow {g \circ f} Z) \to \mathbb B^{coop}F^i(Y \xrightarrow g Z)$$
is defined by, for $c \in \mathbb B^{coop}F^i(X \xrightarrow {g \circ f} Z)$ 
$$(f_*c)_h := c_h \circ (f')^*: F^m(Y') \xrightarrow{(f')^*} F^m(X')  \xrightarrow{c_h} F^{m+i}(Z').$$
Here we use the above commutative diagram (\ref{cd-1}).
\item {\bf Pullback}: For a fiber square 
$$\CD
X' @> {g'}  >> X\\
@V {f'} VV @VV fV\\
Y' @>> {g} > Y\\
\endCD
$$
$$g^*: \mathbb B^{coop}F^i(X \xrightarrow f Y ) \to \mathbb B^{coop}F^i(X' \xrightarrow {f'} Y' )$$
is defined by, for $c \in \mathbb B^{coop}F^i(X \xrightarrow f Y) $
$$(g^*c)_h := c_{g \circ h}: F^m(X'') \to F^{m+i}(Y'').$$
Here we use the above commutative diagrams (\ref{cd-0}).
\end{enumerate} 
Then 
$\mathbb B^{coop}F^*$
becomes a bivariant theory, i.e., satisfies the seven axioms of bivariant theory, listed in \S \ref{BT}.
\end{thm}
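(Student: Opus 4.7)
The plan is to verify Theorem~\ref{coop-biv} by (i) checking that the three defined operations actually send co-operational elements to co-operational elements (i.e.\ that the resulting collections again satisfy the pullback-compatibility square (\ref{cd-01})), and (ii) verifying each of the seven bivariant axioms together with the existence of units. The entire argument is a sequence of diagram chases that is formally dual to the verification of the axioms for Fulton--MacPherson's operational bivariant theory $\mathbb B^{op}h_*$ given in \S\ref{obt}; the only substantive observation is that since $F^*$ is contravariant, the homomorphisms $c_g$ go from $F^m(X')$ into $F^{m+i}(Y')$ and compatibility is with pullbacks rather than pushforwards, which forces the product formula to take the composition $d_h \circ c_{h'}$ rather than $c_{h'} \circ d_h$.

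First I would verify well-definedness of the three operations. For the product, given $c$ and $d$ and a further fiber diagram obtained by pulling back along some $k$, the square (\ref{cd-01}) for $(c \bullet d)_h = d_h \circ c_{h'}$ is obtained by pasting the compatibility squares for $c$ and $d$ side by side and using the functoriality of $F^*$. For the pushforward $(f_*c)_h = c_h \circ (f')^*$, one combines the compatibility square for $c$ with naturality of pullback in $F^*$; note that no confined-map hypothesis is needed here, in contrast with the operational case, because pullbacks in a contravariant functor are defined for arbitrary maps. For the pullback, $(g^*c)_k = c_{g \circ k}$, the required square is simply a restriction of the one for $c$.

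Next I would check the seven axioms. Axiom $A_1$ (associativity of product) reduces to the associativity of the composition $e_h \circ d_{h'} \circ c_{h''}$ of three $F^*$-homomorphisms. Axiom $A_3$ (functoriality of pullback) is immediate from the identity $((g_1 \circ g_2)^* c)_k = c_{g_1 \circ g_2 \circ k}$. Axiom $A_2$ (functoriality of pushforward) follows from contravariance $((f_1 \circ f_2)')^* = (f_2')^* \circ (f_1')^*$ of $F^*$ applied to the appropriate fiber squares. The compatibilities $A_{12}$, $A_{13}$ and $A_{23}$ each become short diagram chases combining the definitions with the compatibility square (\ref{cd-01}) applied to the participating elements. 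Finally, the projection formula $A_{123}$ is obtained by expanding both sides along the cube of fiber squares of the hypothesis and applying (\ref{cd-01}) once to the element indexed by the top face; no confinedness assumption is required, again because $F^*$ admits pullbacks along every map.

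For units, define $1_X \in \mathbb B^{coop}F^0(X \xrightarrow{\op{id}_X} X)$ by $(1_X)_g := \op{id}_{F^*(X')}$ for every $g: X' \to X$; this is the class $\jeden_X^{F^*}$ already singled out in the introduction. It is tautologically compatible with every pullback, so it does lie in $\mathbb B^{coop}F^0$, and the three required unit identities $\alpha \bullet 1_Y = \alpha$, $1_X \bullet \beta = \beta$ and $g^*1_X = 1_{X'}$ all reduce to the fact that composition with an identity map is the identity. The main obstacle in the whole verification is purely bookkeeping: in the projection formula $A_{123}$ one must carefully track the cube of fiber squares so that the two composite maps on $F^*$ being compared have matching source and target; once the diagram is drawn, the equality collapses to a single instance of (\ref{cd-01}).
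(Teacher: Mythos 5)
Your proposal is correct and follows essentially the same route as the paper: first check that the three operations preserve the compatibility condition~(\ref{cd-01}), then verify the seven axioms by direct diagram chases that reduce, in each case, to associativity of composition, contravariance of $F^*$, and a single application of~(\ref{cd-01}); your treatment of the product formula, the pushforward without confinedness, and the projection formula all match the paper's computations. The only small difference is that you also spell out the units axiom (via $\jeden_X^{F^*}$), which the paper's proof does not explicitly address, but this is a minor gloss rather than a substantive departure.
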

\begin{proof} The proofs 
could be omitted. However, since the definition of our co-operational bivariant theory is different from that of Fulton--MacPherson's operational bivariant theory, for the sake of convenience or time-saving for the reader, we write down proofs, giving just comments for trivial ones. 
\begin{enumerate}
\item First we remark that these three operations are compatible with pullbacks. Here we show only the compatibility of the pushforward. For which we consider the following fiber squares:
\begin{equation}\label{cd-1-3}
\CD
X'' @> {k''} >>X' @> {h''} >> X \\
@V {f''}VV @V {f'}VV @VV {f}V\\
Y'' @> {k'} >> Y' @> {h'} >> Y \\
@V {g''}VV @V {g'}VV @VV {g}V \\
Z'' @>> {k} >Z'  @>> {h} > Z \endCD 
\end{equation}
and consider the following diagrams:
\begin{equation}
\xymatrix
{F^m(X'') \ar[dd]_{c_{h\circ k}} &&&& F^m(X') \ar[llll]_{(k'')^*} \ar[dd]^{c_h}\\
& F^m(Y'') \ar[lu]_{(f'')^*} \ar[dl]^{(f'')_*c_{h \circ k}}  && F^m(Y')\ar[ll]_{(k')^*} \ar[dr]_{(f')_*c_h} \ar[ur]^{(f')^*} &\\
F^{m+i}(Z'') &&&& F^{m+i}(Z') \ar[llll]^{k^*}
}
\end{equation}
The outer square, the upper trapezoid, the left and right triangles are commutative, therefore the lower trapezoid is commutative, which means that the pushforward  is compatible with pullback.
\item {\bf $(A_1)$ Product is associative}: It is because the product is defined as composition of homomorphisms and composition is associative.
\item  {\bf $(A_2)$ Pushforward is functorial}: for
\xymatrix
{
X  \ar@/^10pt/[rrr]^{\maru{$c$}}  \ar[r]_f & Y \ar[r]_g  & Z \ar[r]_h  & W
}
$$(g\circ f)_* c= (g_* \circ f_*)c, \quad c \in \mathbb B^{coop}F^i(X \xrightarrow {h \circ g \circ f} Y).$$
Consider the following fiber squares:
\begin{equation}\label{cd-2}
\CD
X' @> {j'''} >> X \\
@V {f'}VV @VV {f}V\\
Y' @> {j''} >> Y \\
@V {g'}VV @VV {g}V \\
Z'  @> {j'} >> Z \\
@V {g'}VV @VV {g}V \\
W'  @>> {j} > W 
\endCD 
\end{equation}
Then by the definition we have
\begin{align*}
((g\circ f)_*c)_j & := c_j \circ (g'\circ f')^* , \quad c_j \in F^m(X') \to F^{m+i}(W') \\
                  & = c_j \circ ((f')^* \circ (g')^*) \\
                  & = (c_j \circ (f')^*) \circ (g')^*\\ 
                  & = (f_*c)_j \circ (g')^*\\  
                  & = (g_*(f_*c))_j \\
                  & = ((g_* \circ f_*)c)_j: F^m(Z') \to  F^{m+i}(W')          
\end{align*}
Therefore $(g\circ f)_* c= (g_* \circ f_*)c \in \mathbb B^{coop}F^i(Z \xrightarrow g W).$
\item {\bf $(A_3)$ Pullback is functorial}: It is clear by the definition of pullback.
\item {\bf $(A_{12})$ Product and pushforward commute}: Let us consider the above diagrams (\ref{cd-2}): Let $\alp \in 
\mathbb B^{coop}F^i(X \xrightarrow {g \circ f} Y)$ and $\beta \in \mathbb B^{coop}F^j(Z \xrightarrow h W).$
\begin{align*}
\left ( f_*(\alp \bullet \beta)\right)_j & = (\alp \bullet \beta)_j \circ (f')^*\\
& = (\beta_j \circ \alp_{j'} ) \circ (f')^*\\
& = \beta_j \circ (\alp_{j'} \circ (f')^*)\\
& = \beta_j \circ (f_*\alp)_{j'}\\
& = ((f_*\alp) \bullet \beta)_j
\end{align*}
Hence we have $f_*(\alp \bullet \beta) = (f_*\alp) \bullet \beta.$
\item {\bf $(A_{13})$ Product and pullback commute}: Let us consider the above diagrams (\ref{cd-1-3}): Let $\alp \in 
\mathbb B^{coop}F^i(X \xrightarrow f Y)$ and $\beta \in \mathbb B^{coop}F^j(Y \xrightarrow g Z).$
\begin{align*}
\left ( h^*(\alp \bullet \beta)\right)_k & = (\alp \bullet \beta)_{h \circ k}\\
& = \beta_{h \circ k} \circ \alp_{h' \circ k'}\\
& = (h^*\beta)_k \circ ((h')^*\alp)_{k'}\\
& = ((h')^*\alp) \bullet h^*\beta)_k
\end{align*}
Hence we have $h^*(\alp \bullet \beta) = (h')^*\alp \bullet h^*\beta.$
\item {\bf $(A_{23})$ Pushforward and pullback commute}: Let us consider the above diagrams (\ref{cd-1-3}): Let $\alp \in 
\mathbb B^{coop}F^i(X \xrightarrow {g \circ f} Z)$.
\begin{align*}
\left ((f')_*(h^*\alp) \right)_k & = (h^*\alp)_k \circ (f'')^*\\
& = \alp_{h \circ k} \circ (f'')^*\\
& = (f_*\alp)_{h \circ k} \\
& = (h^*(f_*\alp))_k
\end{align*}
Hence we have $(f')_*(h^*\alp)= h^*(f_*\alp).$
\item {\bf $(A_{123})$ Projection formula}: 
Let us consider the following diagrams:
\begin{equation}
\xymatrix
{
& X'\ar[rr]^{\qquad \qquad g'} \ar'[d][dd]_{f'} && X \ar[dd]^f &&\\
\widetilde {X'} \ar[ur]^{\widetilde {k''}} \ar[dd]_{\widetilde{f'}}  \ar[rr]^{\qquad \qquad \widetilde {g'}} && \widetilde X \ar[dd] \ar[ur]_{\widetilde {k'}} & & &\\
& Y' \ar'[r][rr]^g && Y \ar[rr]^h && Z\\
\widetilde {Y'} \ar[ur]^{k''} \ar[rr]_{\widetilde g} && \widetilde Y \ar[ur]_{k'}^{\widetilde f \quad } \ar[rr]_{\widetilde h} && \tilde Z \ar[ur]_k
}
\end{equation}
Let $\alp \in  \mathbb B^{coop}F^i(X \xrightarrow f Y)$ and $\beta \in  \mathbb B^{coop}F^j(Y' \xrightarrow {h \circ g} Z)$
\begin{align*}
\left ((g')_*(g^*\alp \bullet \beta ) \right)_k & = (g^*\alp \bullet \beta )_k \circ (\widetilde {g'})^*\\
& = (\beta_k \circ (g^*\alp)_{k''} ) \circ (\widetilde {g'})^*\\
& = \beta_k \circ  ((g^*\alp)_{k''} \circ (\widetilde {g'})^* )\\
& = \beta_k \circ  ((\widetilde g)^* \circ \alp_{k'} )\\
& = ( \beta_k \circ  (\widetilde g)^*) \circ \alp_{k'} \\
& = (g_*\beta)_k \circ \alp_{k'} \\
& = (\alp \bullet g_*\beta)_k
\end{align*}
Hence we have $(g')_*(g^*\alp \bullet \beta )= \alp \bullet g_*\beta.$
\end{enumerate}
\end{proof}
\begin{rem} In Fulton--MacPherson's bivariant theory $\bB^*$ the bivariant pushforward $f_*:\bB^*(X \xrightarrow {g \circ f} Z) \to \bB^*(Y \xrightarrow g Z)$ is defined for a confined map $f:X \to Y$. As one can see, in our co-operational bivariant theory, for the bivariant pushforward $f_*: \mathbb B^{coop}F^i(X \xrightarrow {g \circ f} Z) \to \mathbb B^{coop}F^i(Y \xrightarrow g Z)$ we do not need the confined-ness of the map $f:X \to Y$.
\end{rem}
\begin{rem} Let $\mathbb B^*(X \to Y)$ be a bivariant theory and let $c \in \mathbb B^i(X \xrightarrow f Y)$.
Then for any morphism $g:Y' \to Y$ and consider the fiber square
$$\CD
X' @> {g'}  >> X\\
@V {f'} VV @VV fV\\
Y' @>> {g} > Y.\\
\endCD
$$
Then $(g^*c)^*: B_m(Y'):=\mathbb B^{-m}(Y' \to pt) \to  \mathbb B^{-m+i}(X' \to pt) =:B_{m-i}(X')$
is defined by $(g^*c)^*(a):=g^*c \bullet a$ (as defined in \cite[\S 2.5 Gysin homomorphisms]{FM}). For fiber squares
$$\CD
X'' @> {h'}  >> X' @> {g'}  >> X\\
@V {f''} VV @VV f'V @VV fV\\
Y'' @>> {h} > Y' @>> {g} > Y.\\
\endCD
$$ \, \, \, 
the following diagram commutes:
$$
\xymatrix
{B_{m-i}(X'') \ar[rr]^{(h')_*} && B_{m-i}(X') \\
 B_m(Y'') \ar[u]^{((g \circ h)^*c)^*}  \ar[rr]_{h_*} && B_m(Y') \ar[u]_{(g^*c)^*}.
}
$$ 
Then considering the collection $\{ g^*c \, \, | \, \, g:Y' \to Y\}$ gives us an operational bivariant 
class from the given bivariant
class. So, conversely Fulton--MacPherson's operational bivariant theory is one obtained from a covariant functor. Similarly, our co-operational bivariant theory is motivated by considering the following:
$(g^*c)_*: B^m(X'):=\mathbb B^m(X' \xrightarrow {id} X') \to \mathbb B^{m+i}(Y' \xrightarrow {id} Y') =: B^{m+i}(Y')$
is defined by $(g^*c)_*(b):=f'_*(b \bullet g^*c)$ (as defined in \cite[\S 2.5 Gysin homomorphisms]{FM}). Here we remark that, as pointed out in the first paragraph of \cite[\S 2.5 Gysin homomorphisms, p.26]{FM}, when we consider the Gysin homomorphism $\theta_*$ induced by a bivariant element $\theta \in \bB (X \xrightarrow f Y)$ the map $f$ is assumed to be confined. For fiber squares
$$\CD
X'' @> {h'}  >> X' @> {g'}  >> X\\
@V {f''} VV @VV f'V @VV fV\\
Y'' @>> {h} > Y' @>> {g} > Y.\\
\endCD
$$ \, \, \, 
the following diagram commutes:
$$
\xymatrix
{B^m(X'') \ar[d]_{((g\circ h)^*c)_*} && B^m(X')\ar[ll]_{(h')^*} \ar[d]^{(g^*c)_*}\\
 B^{m+i}(Y'') && B^{m+i}(Y') \ar[ll]^{h^*}.
}
$$  
\end{rem}
The following is a \emph{co-operational version} of Proposition \ref{map op}, i.e., of the Grothendieck transformation $op: \bB \to \bB^{op}$. Since the proof of it is similar to that of the proof of Proposition \ref{map op}, it is omitted, left for the reader.

\begin{pro} Let $\mathbb B$ be a bivariant theory. Let $B^*(X):=\mathbb B^*(X \xrightarrow {id_X} X)$ be the associated contravariant functor and let $\bB^{coop}(X \to Y)$ be the co-operational bivariant theory defined by $\bB^{coop}(X \to Y) :=\bB^{coop}B^*(X \to Y)$ as above. Then, restricted to confined\footnote{As remarked above, the reason why we need to consider confined maps is that $(g^*\alp)_*$ is defined by $(g^*\alp)_*(b)= f'_*(b \bullet g^*\alp)$, namely we use the pushforward $f'_*$, which is defined for a confined map.} morphisms $f:X \to Y$, there exists a canonical Grothendieck transformation
$$coop: \mathbb B \to \mathbb B^{coop}$$
which is defined by
$${coop} (\alp) := \left \{ (g^*\alp)_*:B^*(X') \to B^*(Y') \, | \, \, g:Y' \to Y \right \}$$
Here $(g^*\alp)_*$ is defined by $(g^*\alp)_*(b)= f'_*(b \bullet g^*\alp)$, where $b \in B^*(X')=\mathbb B^*(X' \xrightarrow {id_{X'}} X')$ and $g^*\alp \in \mathbb B^*(X' \xrightarrow {f'} Y')$ and we consider the following fiber square
$$\CD
X' @> {g'}  >> X\\
@V {f'} VV @VV fV\\
Y' @>> {g} > Y.\\
\endCD
$$
Then we have
\begin{enumerate}
\item $coop(\alp \bullet \be)= coop(\alp) \bullet coop(\be)$.
\item $coop(f_*\alp)= f_*coop(\alp)$.
\item $coop(f^*\alp)= f^*coop(\alp)$.
\end{enumerate}
\end{pro}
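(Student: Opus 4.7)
The plan is to follow the template of Proposition \ref{map op}, performing the analogous diagram chase on the contravariant side. First, I would verify that $coop(\alp)$ is genuinely a co-operational class, i.e.\ that the collection $\{(g^*\alp)_*\}_g$ satisfies the compatibility (\ref{cd-01}). For a fiber diagram as in (\ref{cd-0}) and $b \in B^*(X')$, this reduces to the identity
$$f''_*((h')^*b \bullet (g \circ h)^*\alp) = h^*(f'_*(b \bullet g^*\alp)).$$
I would rewrite $(g \circ h)^*\alp = h^*(g^*\alp)$ by axiom $(A_3)$, then apply $(A_{13})$ to pull $h^*$ out of the bivariant product as $(h')^*b \bullet h^*(g^*\alp) = h^*(b \bullet g^*\alp)$, and finally use $(A_{23})$ to commute $h^*$ past $f'_*$. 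The confinedness of $f$ (and hence of every base change $f'$, $f''$, etc.) ensures all pushforwards in sight are defined.

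For the product identity $coop(\alp \bullet \be) = coop(\alp) \bullet coop(\be)$, I would evaluate both sides at $h:Z' \to Z$ on $b \in B^*(X')$ using the stacked fiber squares (\ref{cd-1}). The left side unfolds to $(g' \circ f')_*(b \bullet h^*(\alp \bullet \be))$; applying $(A_{13})$ to split $h^*(\alp \bullet \be) = (h')^*\alp \bullet h^*\be$, associativity $(A_1)$ to regroup, $(A_2)$ to factor $(g' \circ f')_* = g'_* \circ f'_*$, and $(A_{12})$ to pull $h^*\be$ past $f'_*$ yields $g'_*(f'_*(b \bullet (h')^*\alp) \bullet h^*\be)$, which is precisely $((coop(\be))_h \circ (coop(\alp))_{h'})(b)$, i.e.\ the right side by the definition of the co-operational product $(c \bullet d)_h = d_h \circ c_{h'}$.

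The pullback identity $coop(k^*\alp) = k^*coop(\alp)$ is immediate from the functoriality of pullback $(A_3)$: both sides evaluated at a map to the pullback target reduce to $f''_*(b \bullet (k \circ \,\cdot\,)^*\alp)$. The pushforward identity $coop(f_*\alp) = f_*coop(\alp)$ is the step I expect to be the main obstacle. For $h:Z' \to Z$ and $b' \in B^*(Y')$, after stripping the outer $g'_*$, I must establish
$$b' \bullet h^*(f_*\alp) = f'_*((f')^*b' \bullet h^*\alp).$$
Using $(A_{23})$ to rewrite $h^*(f_*\alp) = f'_*(h^*\alp)$, this becomes precisely the projection formula $(A_{123})$ applied to the independent square whose right vertical map is the confined $f':X'\to Y'$, with $b'$ playing the role of ``$\alpha$'' and $h^*\alp$ the role of ``$\beta$'' in $g'_*(g^*\alpha \bullet \beta) = \alpha \bullet g_*\beta$. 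The only subtlety is carefully matching the four variables in $(A_{123})$ to our setting, since here the role of the ``base change'' map is itself played by $f'$; once this matching is set up, the identity falls out immediately, and all three Grothendieck identities reduce to the seven bivariant axioms in the same spirit as Proposition \ref{map op}.
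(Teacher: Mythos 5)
Your proof is correct and does exactly what the paper asks the reader to do, namely transcribe the diagram chase of Proposition \ref{map op} to the contravariant side using the Gysin homomorphism $(g^*\alp)_*(b) = f'_*(b \bullet g^*\alp)$. You have also correctly identified the one genuine adaptation that is needed: because the co-operational Gysin map carries a pushforward $f'_*$ (absent in the operational case, where $(g^*\alp)^*$ is just a bivariant product), the pushforward compatibility reduces, after $(A_{23})$ and $(A_2)$, to $b' \bullet f'_*(h^*\alp) = f'_*\bigl((f')^*b' \bullet h^*\alp\bigr)$, which is precisely the projection formula $(A_{123})$ applied to the degenerate independent square with vertical identities and horizontal $f'$ (an independent square by axiom (ii)) — an axiom that never appears in the proof of Proposition \ref{map op}. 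Similarly the product step needs $(A_2)$ and $(A_{12})$ on top of $(A_{13})$ and $(A_1)$ to move the outer $(g'\circ f')_*$ through. One phrasing caveat: ``stripping the outer $g'_*$'' should be read as proving the stronger equality \emph{before} applying $g'_*$, not as cancelling a non-injective map — your subsequent computation shows you mean the former, so the logic is sound.
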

We have the following co-operational versions of Proposition \ref{pro-op}, Proposition \ref{im-ga} 
and Corollary \ref{cor-co-surj}, respectively.
\begin{pro}\label{pro-coop} Let $\bB$ be a bivariant theory and $\bB^{coop}$ be its associated co-operational bivariant theory.
\begin{enumerate}
\item Define
$\widehat{\bB}^{coop}:= \op{Image}(coop:\bB \to \bB^{coop}),$
that is 
$$\widehat{\bB}^{coop}(X \to Y) := coop(\bB(X \to Y))  = \left \{coop(\alp) \, \, | \, \, \alp \in \bB(X \to Y) \right \} \subset \bB^{coop}(X \to Y).$$
Then $\widehat{\bB}^{coop}$ is a co-operational bivariant theory, which is a subtheory of $\bB^{coop}$, and 
$\widehat{coop}: \bB \to \widehat{\bB}^{coop}$ defined by $\widehat{coop}(\alp) = coop(\alp)$ and the inclusion 
$\iota^{coop}_{\bB}:\widehat{\bB}^{coop} \hookrightarrow \bB^{coop}$ are Grothendieck transformations.
\item \label{X-to-X-coop} For the identity map $\op{id}_X:X \to X$ we have the isomorphism:
$$\bB^*(X) = \bB(X \xrightarrow {\op{id}_X} X) \cong \widehat{\bB}^{coop} (X \xrightarrow {\op{id}_X} X).$$
\end{enumerate}
\end{pro}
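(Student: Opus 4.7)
The plan is to mirror the proof of Proposition \ref{pro-op} as closely as possible, since the statement is its direct co-operational analogue and the preceding proposition already guarantees that $coop:\bB \to \bB^{coop}$ is a Grothendieck transformation. I would handle (1) first as an essentially formal consequence of this, then concentrate the real content into (2), which reduces to an injectivity statement that follows from the unit axiom. Throughout I would restrict attention to confined morphisms whenever the pushforward intervenes, exactly as was done when defining $coop$.

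For part (1), I would define $\widehat{\bB}^{coop}(X \to Y) := coop(\bB(X \to Y))$ and check that this subcollection is closed under the three co-operational operations. Each closure property is immediate from the corresponding identity $coop(\alp \bullet \be) = coop(\alp) \bullet coop(\be)$, $coop(f_*\alp) = f_*coop(\alp)$ and $coop(g^*\alp) = g^*coop(\alp)$ provided by the preceding proposition: if $c = coop(\alp)$ and $d = coop(\be)$, then $c \bullet d = coop(\alp \bullet \be)$ lies in $\widehat{\bB}^{coop}$, and similarly for pushforward and pullback. The unit of $\bB^{coop}$ is $coop(1_X)$ by definition, so units land in the image as well. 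The inclusion $\iota^{coop}_{\bB}:\widehat{\bB}^{coop} \hookrightarrow \bB^{coop}$ is then a Grothendieck transformation tautologically, and $\widehat{coop}$ is $coop$ with codomain restricted to its image, so it preserves product, pushforward and pullback because $coop$ does.

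For part (2), surjectivity of $coop:\bB(X \xrightarrow{\op{id}_X} X) \to \widehat{\bB}^{coop}(X \xrightarrow{\op{id}_X} X)$ is built into the definition of $\widehat{\bB}^{coop}$, so everything comes down to injectivity. Suppose $\alp, \be \in \bB^*(X) = \bB(X \xrightarrow{\op{id}_X} X)$ satisfy $coop(\alp) = coop(\be)$. Specialising the collection $\{(g^*\alp)_* \mid g:Y'\to X\}$ to $g = \op{id}_X$, the fiber square becomes the trivial square on $X$, so $g^*\alp = \alp$, $g^*\be = \be$ and $f' = \op{id}_X$. Hence for every $b \in B^*(X) = \bB^*(X)$,
\begin{equation*}
b \bullet \alp \;=\; (\op{id}_X)_*(b \bullet \alp) \;=\; \alp_*(b) \;=\; \be_*(b) \;=\; (\op{id}_X)_*(b \bullet \be) \;=\; b \bullet \be.
\end{equation*}
Applying this identity to $b = 1_X \in \bB^0(X \xrightarrow{\op{id}_X} X)$ and using the unit axiom $1_X \bullet \alp = \alp$, $1_X \bullet \be = \be$, we obtain $\alp = \be$.

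I expect no genuine obstacle: the proof of Proposition \ref{pro-op}(2) transfers essentially verbatim, once one notes that on the diagonal entry $X \xrightarrow{\op{id}_X} X$ the relevant Gysin-style map $(g^*\alp)_*(b) = f'_*(b \bullet g^*\alp)$ degenerates to left multiplication by $\alp$, so the unit $1_X$ plays the role of the element $1 \in \bB_*(pt)$ that was used in the operational case. The only point requiring mild care is that $(g^*\alp)_*$ is only defined when $f$ is confined (as recalled in Remark following the definition of $\widehat{coop}$), but the identity map is confined, so the argument applies at $(X \xrightarrow{\op{id}_X} X)$ without modification.
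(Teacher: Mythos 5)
Your proposal is correct and follows essentially the same route as the paper's own proof: part (1) is disposed of by observing that $coop$ preserves the three operations (the paper omits this verification), and the content of part (2) is exactly the specialisation to $g=\op{id}_X$ together with the unit axiom, yielding $\alp = 1_X \bullet \alp = 1_X \bullet \be = \be$. Your side remark that $(g^*\alp)_*$ requires confinedness of $f$ and that $\op{id}_X$ is confined is a correct and worthwhile clarification not spelled out in the paper's proof.
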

\begin{proof}
 As in the proof of Proposition \ref{pro-op}, we show only (\ref{X-to-X-coop}).  Let $\alp, \be \in \bB(X \xrightarrow {\op{id}_X} X)$. It suffices to show that $coop(\alp) =coop(\be)$ implies $\alp = \be$, i.e., $(g^*\alp)_* =(g^*\be)_*$ for all $g:Y \to X$ implies that $\alp = \be$. Let us consider the case $\op{id}_X:X \to X$. Then we are supposed to have $(\op{id}_X^*\alp)_* =(\op{id}_X^*\be)_*$, i.e., $(\op{id}_X^*\alp)_* (x) =(\op{id}_X^*\be)_*(x)$, i.e.,
$$x \bullet \alp = (\op{id}_X^*\alp)_* (x) = (\op{id}_X^*\be)_*(x) = x \bullet \be $$
 for any $x \in \bB^*(X)=\bB(X \xrightarrow {\op{id}_X} X)$. In particular, let $x$ be the unit $1_X \in \bB^0(X)=\bB^0(X \xrightarrow {\op{id}_X} X)$, then we have
 $$ \alp = 1_X \bullet \alp = 1_X \bullet \be = \be.$$
 Here the left and right equalities are due to the property of the unit (see ({\bf Units}) in \S 2, or \cite[\S 2.2 Axioms for a bivariant theory, p. 22]{FM}.) 
 Therefore we have that $coop(\alp) =coop(\be)$ implies $\alp = \be$.
 \end{proof} 
For each $\alp \in \bB(X \to Y)$ and each $g:Y' \to Y$ we have the following commutative diagram:
\begin{equation}\label{indiv-com-co}
\xymatrix
{
\bB^*(X') \ar[d]_{(g^*\alp)_*} \ar[rr]^{\ga}  && \bB'^*(X') \ar[d]^{(g^*\ga(\alp))_*} \\
\bB^*(Y') \ar[rr]_{\ga}  && \bB'^*(Y') 
}
\end{equation}
Here we emphasize that $\widehat{coop}(\alp) = coop(\alp) = \{ (g^*\alp)_* \, | \, g:Y' \to Y\}$ and $\widehat{coop}(\ga(\alp)) = coop(\ga(\alp)) = \{ (g^*\ga(\alp))_* \, | \, g:Y' \to Y\}$. Just like the operational bivariant theory, $coop(\alp) =coop(\beta)$ does not necessarily implies that 
$coop(\ga(\alp)) =coop(\ga(\beta))$. However, if $\ga:\bB^* \to \bB'^*$ is surjective, then $coop(\alp) =coop(\beta)$ \emph{does} imply that 
$coop(\ga(\alp)) =coop(\ga(\beta))$. Therefore we have the following:
\begin{pro}\label{im-t}
Let $\ga: \bB \to \bB'$ be a Grothendieck transformation. Then there is a canonical Grothendieck transformation
$$\widehat{\ga}^{coop}: \widehat{\bB}^{coop} \to \widehat{\op{Im}_{\ga}\bB'}^{coop}$$
which is defined by $\widehat{\ga}^{coop}(coop(\alp)):= coop(\ga(\alp))$. Hence, the following is a commutative diagram of Grothendieck transformations:
\[
\xymatrix
{
\bB \ar[rr]^{\ga} \ar[d]_{\widehat{coop}} && \op{Im}_{\ga}\bB' \ar[d]^{\widehat{coop}} \\
\widehat{\bB}^{coop} \ar[rr]_{\widehat{\ga}^{coop}} && \widehat{\op{Im}_{\ga}\bB'}^{coop} 
}
\]
\end{pro}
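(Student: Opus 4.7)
The plan is to verify three things in order: (i) that the proposed formula $\widehat{\ga}^{coop}(coop(\alp)) := coop(\ga(\alp))$ is well-defined as a map $\widehat{\bB}^{coop}(X \to Y) \to \widehat{\op{Im}_{\ga}\bB'}^{coop}(X \to Y)$, (ii) that this map respects the three bivariant operations, hence is a Grothendieck transformation, and (iii) that the stated square commutes. I expect (i) to be the main obstacle; the paragraph just above the statement already flags the key point, namely that replacing the codomain $\bB'$ by the subtheory $\op{Im}_{\ga}\bB'$ is precisely what is needed to force surjectivity of $\ga$ on contravariant parts and thereby rescue the naive definition.

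For (i), I would start from the assumption $coop(\alp) = coop(\beta)$ in $\widehat{\bB}^{coop}(X \to Y)$, i.e.\ $(g^*\alp)_* = (g^*\beta)_*$ as maps $\bB^*(X') \to \bB^*(Y')$ for every $g: Y' \to Y$. Chasing around the commutative square (\ref{indiv-com-co}) yields
\[
(g^*\ga(\alp))_* \circ \ga \;=\; \ga \circ (g^*\alp)_* \;=\; \ga \circ (g^*\beta)_* \;=\; (g^*\ga(\beta))_* \circ \ga
\]
as maps $\bB^*(X') \to \bB'^*(Y')$. By the very definition of $\op{Im}_{\ga}\bB'$, the map $\ga: \bB^*(X') \to (\op{Im}_{\ga}\bB')^*(X')$ is surjective, so cancelling $\ga$ on the right forces $(g^*\ga(\alp))_* = (g^*\ga(\beta))_*$ on all of $(\op{Im}_{\ga}\bB')^*(X')$ for every $g$, i.e.\ $coop(\ga(\alp)) = coop(\ga(\beta))$ inside $\widehat{\op{Im}_{\ga}\bB'}^{coop}$. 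This shows $\widehat{\ga}^{coop}$ is well-defined; note that $\op{Im}_{\ga}\bB'$ is itself a bivariant subtheory of $\bB'$, as recorded in Section \ref{BT}, so the target $\widehat{\op{Im}_{\ga}\bB'}^{coop}$ makes sense.

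For (ii), each of the three compatibilities is immediate by chaining the Grothendieck-transformation property of $\ga$ with the analogous property of $coop$ established in the co-operational version of Proposition \ref{map op}: for example,
\[
\widehat{\ga}^{coop}\bigl(coop(\alp) \bullet coop(\be)\bigr) = coop\bigl(\ga(\alp \bullet \be)\bigr) = coop\bigl(\ga(\alp) \bullet \ga(\be)\bigr) = \widehat{\ga}^{coop}(coop(\alp)) \bullet \widehat{\ga}^{coop}(coop(\be)),
\]
and pushforward and pullback are handled identically. Finally (iii) is tautological from the definition: $\widehat{\ga}^{coop} \circ \widehat{coop}(\alp) = coop(\ga(\alp)) = \widehat{coop}(\ga(\alp))$. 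Apart from the well-definedness checked in (i), everything reduces to unwinding definitions, so the proof should be short and the surjectivity trick is the only substantive input.
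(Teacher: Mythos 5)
Your proof is correct and follows essentially the same route as the paper: you identify well-definedness as the only substantive issue, chase the same commutative square (\ref{indiv-com-co}) to get $(g^*\ga(\alp))_* \circ \ga = (g^*\ga(\beta))_* \circ \ga$, and then cancel $\ga$ on the right using that $\ga:\bB^*(X')\to(\op{Im}_\ga\bB')^*(X')$ is surjective by construction, exactly mirroring the paper's discussion in the operational case before Proposition \ref{im-ga} and the remarks immediately preceding Proposition \ref{im-t}. The reduction of the Grothendieck-transformation axioms and the commutativity of the square to definitional unwinding also matches the paper's (implicit) argument.
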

In the above proposition we need the fact that $\ga:\bB^* \to (\op{Im}_{\ga}\bB')^* = \op{Image}(\ga: \bB^* \to \bB'^*)$ is surjective. So we have the following:
\begin{cor}
Let $\ga:\bB \to \bB'$ be a ``contravariant-surjective'' Grothendieck transformation. Then there is a canonical Grothendieck transformation
$$\widehat{\ga}^{coop}: \widehat{\bB}^{coop} \to \widehat{\bB'}^{coop}$$
which is defined by $\widehat{\ga}^{coop}(coop(\alp)):= coop(\ga(\alp))$
\end{cor}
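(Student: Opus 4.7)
The plan is to deduce this corollary directly from Proposition \ref{im-t} (and the argument preceding it), observing that the contravariant-surjectivity hypothesis is exactly what is needed to replace $\widehat{\op{Im}_{\ga}\bB'}^{coop}$ by the full $\widehat{\bB'}^{coop}$ in the target. The heart of the matter is well-definedness of the assignment $\widehat{\ga}^{coop}(coop(\alp)):= coop(\ga(\alp))$; once that is in hand, both that $\widehat{\ga}^{coop}$ lands in $\widehat{\bB'}^{coop}$ (automatic, since $\ga(\alp)\in\bB'$) and that it is a Grothendieck transformation will be formal.

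For well-definedness, suppose $\alp,\be\in\bB(X\to Y)$ with $coop(\alp)=coop(\be)$, i.e.\ $(g^*\alp)_*=(g^*\be)_*$ for every $g:Y'\to Y$. Applying the commutative square (\ref{indiv-com-co}) for both $\alp$ and $\be$ gives, as maps $\bB^*(X')\to\bB'^*(Y')$,
$$(g^*\ga(\alp))_*\circ \ga \; = \; \ga\circ (g^*\alp)_* \; = \; \ga\circ (g^*\be)_* \; = \; (g^*\ga(\be))_*\circ \ga.$$
By the contravariant-surjectivity hypothesis, $\ga:\bB^*(X')\to\bB'^*(X')$ is surjective, so we may right-cancel $\ga$ and obtain $(g^*\ga(\alp))_*=(g^*\ga(\be))_*$ for every $g$. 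This is precisely $coop(\ga(\alp))=coop(\ga(\be))$, showing the rule is well-defined. The main (and really only) obstacle is this cancellation step: without contravariant-surjectivity the equality of $(g^*\ga(\alp))_*$ and $(g^*\ga(\be))_*$ holds only on the image of $\ga$, which is why Proposition \ref{im-t} had to pass to $\op{Im}_{\ga}\bB'$.

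For the Grothendieck transformation axioms, I would invoke the co-operational version of Proposition \ref{map op}, which guarantees that $\widehat{coop}:\bB\to\widehat{\bB}^{coop}$ and $\widehat{coop}:\bB'\to\widehat{\bB'}^{coop}$ are surjective Grothendieck transformations, and that $\ga:\bB\to\bB'$ is a Grothendieck transformation by hypothesis. The defining identity $\widehat{\ga}^{coop}\circ\widehat{coop}=\widehat{coop}\circ\ga$ then forces $\widehat{\ga}^{coop}$ to preserve bivariant product, pushforward and pullback on every element of the form $coop(\alp)$; since every element of $\widehat{\bB}^{coop}$ has this form, $\widehat{\ga}^{coop}$ is a Grothendieck transformation. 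Concretely, for the three axioms one reads off, respectively, $\widehat{\ga}^{coop}(coop(\alp)\bullet coop(\be))=coop(\ga(\alp\bullet\be))=coop(\ga(\alp)\bullet\ga(\be))=\widehat{\ga}^{coop}(coop(\alp))\bullet\widehat{\ga}^{coop}(coop(\be))$, and analogously for $f_*$ and $g^*$, using that $\ga$ commutes with these operations and $coop$ does likewise.
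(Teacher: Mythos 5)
Your proof is correct and follows essentially the same route as the paper: the well-definedness argument via the commutative square (\ref{indiv-com-co}) and right-cancellation of the surjective $\ga$ is exactly the reasoning the paper carries out in the paragraph before Proposition \ref{im-t}, and the corollary is then read off by replacing $\op{Im}_{\ga}\bB'$ by $\bB'$ under the contravariant-surjectivity hypothesis. The verification of the Grothendieck transformation axioms from $\widehat{\ga}^{coop}\circ\widehat{coop}=\widehat{coop}\circ\ga$ likewise matches the paper's intent.
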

Unlike the case of a natural transformation $t:h_* \to \tilde h_*$ of covariant functors (see the paragraph right after Remark \ref{3-rem-bb}), 
as in Remark \ref{contra} (see \cite[\S 3.2 ]{FM}), there exists a Grothendieck transformation of their associated bivariant theories
$$t:h^*(X \to Y) \to h'^*(X \to Y), \text{\, which shall be denoted by} \quad t:\bB h(X \to Y) \to \bB h'(X \to Y)$$
such that for the identity map $\op{id}_X: X \to X$ the homomorphism $t:\bB h(X \xrightarrow {\op{id}_X} X) \to \bB h'(X \xrightarrow {\op{id}_X} X) $ is equal to the original homomorphism $t: h^*(X) \to h'^*(X)$ (see \cite[\S 3.1.7]{FM}). 
\begin{thm}\label{thh'} Let $t: h^* \to h'^*$ be a natural transformation of two multiplicative cohomology theories. 
\begin{enumerate}
\item Then there exits a Grothendieck transformation of co-operational bivariant theories
$$\widehat t^{coop}: \widehat{\bB h}^{coop} \to \widehat{\op{Im}_t\bB h'}^{coop}$$
such that for the identity map $id_X:X \to X$ the following composition of homomorphisms
$$e_{\bB h'} \circ \widehat t^{coop}:\widehat{\bB h}^{coop} (X \xrightarrow {\op{id}_X} X)  \to \widehat{\op{Im}_t \bB h'}^{coop}(X \xrightarrow {\op{id}_X} X) \hookrightarrow \widehat{\bB h'}^{coop} (X \xrightarrow {\op{id}_X} X) $$
 is equal to the original homomorphism  $t: h^*(X) \to h'^*(X)$ 
via the isomorphisms 
$$\widehat{\bB h}^{coop} (X \xrightarrow {\op{id}_X} X) \cong h^*(X), \widehat{\bB h'}^{coop} (X \xrightarrow {\op{id}_X} X) \cong h'^*(X): coop(\alp) \leftrightarrow \alp.$$ 
Here $e_{\bB h'} : \widehat{\op{Im}_t \bB h'}^{coop}(X \xrightarrow {\op{id}_X} X) \hookrightarrow \widehat{\bB h'}^{coop} (X \xrightarrow {\op{id}_X} X) $ is the inclusion.
\item For each $\alp \in h^*(X)$ and each $g:X' \to X$, the homomorphism $(g^*\alp)_*$, an element of $coop(\alp)$, is the homomorphism defined by taking the cup-product with $g^*\alp$:
$$ (-)\cup g^*\alp: h^*(X') \to h^*(X'), \text{i.e., defined by $((-) \cup g^*\alp)(x):= x \cup g^*\alp$ for $x \in h^*(X')$}$$
and the following diagram commutes:
\begin{equation}\label{cd-cup}
\xymatrix
{
h^*(X') \ar[rr]^t \ar[d]_{(-) \cup g^*\alp} && h'^*(X') \ar[d]^{(-) \cup g^*(t(\alp))} \\
h^*(X')\ar[rr]_{t} && h'^*(X').
}
\end{equation}
\end{enumerate}
 \end{thm}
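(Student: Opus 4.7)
The plan is to leverage the Grothendieck transformation $t:\bB h \to \bB h'$ that Fulton--MacPherson associate to the multiplicative natural transformation $t:h^* \to h'^*$ (recalled in Remark~\ref{contra}) and then feed it into Proposition~\ref{im-t}. This immediately produces the Grothendieck transformation
$$\widehat t^{coop}:\widehat{\bB h}^{coop} \to \widehat{\op{Im}_t \bB h'}^{coop}, \qquad coop(\alp)\mapsto coop(t(\alp)).$$

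To verify the identification claimed in part (1), I will combine the canonical isomorphism $\bB h^*(X \xrightarrow{\op{id}_X} X)\cong h^*(X)$ of the Remark following Definition~\ref{biv-hom} with the isomorphism $\bB h^*(X \xrightarrow{\op{id}_X} X)\cong \widehat{\bB h}^{coop}(X \xrightarrow{\op{id}_X} X)$, $\alp\leftrightarrow coop(\alp)$, furnished by Proposition~\ref{pro-coop}(2), and analogously for $h'$. Under these identifications the composition $e_{\bB h'}\circ\widehat t^{coop}$ sends $\alp$ to $t(\alp)$, which is exactly the original homomorphism $t:h^*(X)\to h'^*(X)$, as required.

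For part (2), I specialize the definition of $(g^*\alp)_*$: with $f=\op{id}_X$ the fiber square over $g:X'\to X$ has $f'=\op{id}_{X'}$ and $g'=g$, so for $x\in h^*(X')\cong \bB h^*(X' \xrightarrow{\op{id}_{X'}} X')$,
$$(g^*\alp)_*(x) \;=\; f'_*(x\bullet g^*\alp)\;=\; x\bullet g^*\alp\;=\; x\cup g^*\alp,$$
where the last equality uses that in Fulton--MacPherson's construction of $\bB h$ from a multiplicative cohomology theory (cf.~\cite[\S 3]{FM}) the bivariant product on $\bB h^*(X' \xrightarrow{\op{id}_{X'}} X')$ coincides with the cup product on $h^*(X')$. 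The commutativity of diagram~(\ref{cd-cup}) then follows from $t$ being a Grothendieck transformation, hence preserving both $\bullet$ and $g^*$:
$$t(x\cup g^*\alp)=t(x\bullet g^*\alp)=t(x)\bullet t(g^*\alp)=t(x)\bullet g^*(t(\alp))=t(x)\cup g^*(t(\alp)).$$

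The main obstacle is not any single computation but rather the bookkeeping between three parallel languages: the abstract co-operational bivariant theory $\bB^{coop}$, Fulton--MacPherson's bivariant theory $\bB h$ with its operations derived from~(\ref{embed}), and the ordinary cohomology $h^*$ with its cup product. Once the chain of canonical isomorphisms above is kept straight, every equality I need reduces either to an axiom of bivariant theory, to Proposition~\ref{im-t}, or to the defining properties of $\bB h$ worked out in \cite[\S 3]{FM}.
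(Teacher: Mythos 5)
Your argument is correct and follows essentially the same route as the paper's own proof: part (1) is Proposition~\ref{im-t} applied to the Grothendieck transformation $t:\bB h \to \bB h'$ combined with the isomorphism of Proposition~\ref{pro-coop}(2), and part (2) is the same specialization of $(g^*\alp)_*$ to $f=\op{id}_X$ together with the identification of $\bullet$ with the cup product in Fulton--MacPherson's $\bB h$. The only small stylistic difference is that you derive the commutativity of diagram~(\ref{cd-cup}) explicitly from $t$ preserving $\bullet$ and $g^*$, whereas the paper just cites ``the property of the cup-product''; the content is the same.
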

\begin{proof} 

(1) It follows from the above Proposition \ref{im-t}, applying it to the Grothendieck transformation $t:\bB h(X \to Y) \to \bB h'(X \to Y)$, that there exists a Grothendieck transformation of co-operational bivariant theories
$$\widehat t^{coop}: \widehat{\bB h}^{coop} \to \widehat{\op{Im}_t\bB h'}^{coop}$$
Now by the above construction we have
$$\widehat{\bB h}^{coop} (X \xrightarrow {\op{id}_X} X) = \{ coop(\alp) \, | \, \alp \in \bB h(X \xrightarrow {\op{id}_X} X) =h^*(X) \}.$$
Since $\widehat t^{coop}(coop(\alp)) =coop(t(\alp))$, 
$$e_{\bB h'} \circ \widehat t^{coop}:\widehat{\bB h}^{coop} (X \xrightarrow {\op{id}_X} X)  \to \widehat{\op{Im}_t \bB h'}^{coop}(X \xrightarrow {\op{id}_X} X) \hookrightarrow \widehat{\bB h'}^{coop} (X \xrightarrow {\op{id}_X} X) $$
 is equal to the original homomorphism  $t: h^*(X) \to h'^*(X)$ 
via the isomorphism 
$\widehat{\bB h}^{coop} (X \xrightarrow {\op{id}_X} X) \cong h^*(X): coop(\alp) \leftrightarrow \alp.$\\

(2) First we note that for $ \alp \in \bB h(X \xrightarrow {\op{id}_X} X) =h^*(X)$ we have
$$coop(\alp)= \{(g^*\alp)_* : h^*(X') \to h^*(X') | g:X' \to X \}.$$
Here we emphasize that the homomorphism $(g^*\alp)_* : h^*(X') \to h^*(X')$ is defined by, for $x' \in h^*(X')$
$$(g^*\alp)_*(x')=(\op{id}_{X'})_*(x' \bullet g^*\alp) = x' \bullet g^*\alp,$$
which is equal to the cup-product $x' \cup g^*\alp$, by the definition of the associated bivariant theory $\bB h^*(X \to Y)$ constructed from the multiplicative cohomology theory $h^*$ (see \cite[\S 3.1.4 - \S 3.1.9]{FM}, in particular \cite[\S 3.1.7]{FM} for $x' \bullet g^*\alp$ being equal to the cup-product $x' \cup g^*\alp$.)  Hence the homomorphism $(g^*\alp)_*$ is one defined by taking the cup-product with $g^*\alp$ and the commutativity of the above diagram follows from the property of the cup-product.

\end{proof}
\begin{rem} The diagram (\ref{cd-cup}) means that the above homomorphisms $ (-) \cup g^*\alp: h^*(X') \to h^*(X')$ and  $ (-) \cup g^*(t(\alp)): h'^*(X') \to h'^*(X')$, i.e., cohomology operations defined by taking the cup-product with $g^*\alp$ and $g^*(t(\alp))$, are compatible with the natural transformation $t:h^* \to h'^*$. Speaking of the cup-product, the following $k$-times cup-product is also \emph{a cohomology operation}:
$$\text{$\sqcup^k:h^*(X) \to h^*(X)$ defined by $\sqcup^k(x):=x^k = \underbrace{x \cup x \cup \cdots \cup x}_k$,}$$
and it is also compatible with the natural transformation $t:h^* \to h'^*$.
To be more precise, if we write the degree, we have $\sqcup^k:h^m(X) \to h^{km}(X)$.
And the following diagram commutes:
\begin{equation}\label{sq-cup}
\xymatrix
{
h^*(X') \ar[rr]^t \ar[d]_{\sqcup^k} && h'^*(X') \ar[d]^{\sqcup^k} \\
h^*(X')\ar[rr]_{t} && h'^*(X').
}
\end{equation}
Because $t(\sqcup^k(x)) =t(x^k)= t(\underbrace{x \cup \cdots \cup x}_k) = \underbrace{t(x) \cup \cdots \cup t(x)}_k =(t(x))^k =\sqcup^k(t(x))$,
So let us denote 
$\sqcup^k:=\{\sqcup^k:h^*(X') \to h^*(X') \, \, | \, \, g:X' \to X \}.$
Clearly $\sqcup^k \not \in \widehat{\bB}^{coop}h(X \xrightarrow {\op{id}_X} X)$, for $k=2, 3, \cdots$.
In fact, $\sqcup^k:h^*(X) \to h^*(X)$ is \emph{not a homomorphism}, hence $\sqcup^k \not \in \bB^{coop}h(X \xrightarrow {\op{id}_X} X)$
because we define a co-operational bivariant class to be a collection of homomorphisms. So, if we consider a collection of \emph{maps} instead of \emph{homomorphisms}, then we get another larger co-operational bivariant theory $\overline{\bB}^{coop}h(X \to Y)$, then we can have 
$\sqcup^k \in \overline{\bB}^{coop}h(X \xrightarrow {\op{id}_X} X)$. 
\end{rem}
The above theorem means the following:
\begin{enumerate}
\item In the case of a natural transformation $t:h^* \to h'^*$ between \emph{multiplicative cohomology theories} $h^*$ and $h'^*$, \emph{using the associated bivariant theories $\bB h$ and $\bB h'$ constructed by Fulton and MacPherson} \cite[\S 3.1 Construction of a bivariant theory from a cohomology theory]{FM}, we can extend the natural transformation $t:h^* \to h'^*$ to a Grothendieck transformation to co-operational bivariant theories
$$\widehat t^{coop}: \widehat{\bB h}^{coop} \to \widehat{\op{Im}_t \bB h'}^{coop}$$ 
by restricting to the subtheories $\widehat{\bB h}^{coop} \subset \bB h^{coop}$ and $\widehat{\op{Im}_t \bB h'}^{coop} \subset \bB h'^{coop}$, although we may not be able to extend it to the whole co-operational bivariant theories $\bB h^{coop}$ and $\bB h'^{coop}$.
\item Each element $c_g$ of a co-operational bivariant class $c =\{c_g :h^*(X') \to h^*(X') \, | \, g:X' \to X\}\in \widehat{\bB h}^{coop}(X \xrightarrow {\op{id}_X} X)$ 
is a cohomology operation of $h^*$, which is \emph{a simple homomorphism defined by taking the cup-product} with $g^*\alp$ for $\alp \in h^*(X)$, i.e.,
\begin{equation}\label{c_g}
c_g(x) = x \cup g^*\alp.
\end{equation}
Since we have the isomorphism $\widehat{\bB h}^{coop}(X \xrightarrow {\op{id}_X} X) \cong h^*(X),\, coop(\alp) \longleftrightarrow \alp$, we can denote the co-operational bivariant class $c$ by $\alp$, thus the above equality (\ref{c_g}) could be expressed by, abusing notation or symbol,
\begin{equation*}
\alp_g(x) = x \cup g^*\alp.
\end{equation*}
\item However, by this construction, another co-operational bivariant class $\sqcup^k \in \overline{\bB}^{coop}h(X \xrightarrow {\op{id}_X} X)$ consisting of \emph{very simple cohomology operations} $\sqcup^k:h^*(X') \to h'^*(X)$ for all $g:X' \to X$ \emph{cannot belong to} $\widehat{\overline{\bB} h}^{coop}(X \xrightarrow {\op{id}_X} X)$, since $\widehat{\overline{\bB} h}^{coop}(X \xrightarrow {\op{id}_X} X)$ is a co-operational bivariant theory constructed from a multiplicative cohomology theory $h^*$, thus a co-operational bivariant class $c \in \widehat{\overline{\bB} h}^{coop}(X \xrightarrow {\op{id}_X} X)$ \emph{has to} satisfy the above formula (\ref{c_g}), but clearly $\sqcup^k$ \emph{does not} satisfy (\ref{c_g}).
\end{enumerate}
So, after observing these, we want to pose the following question:
\begin{qu} Can one construct another associated bivariant theory $\widetilde {\bB} h$ (which is different from Fulton--MacPherson's bivariant theory $\bB h$) for a multiplicative cohomology theory $h^*$ such that 
$$\sqcup^k \in \widehat{\widetilde {\bB} h}^{coop}(X \xrightarrow {\op{id}_X} X)?$$
\end{qu}

Suppose that we have natural transformation of two contravariant functors which are not necessarily multiplicative cohomology theories:
$$T:F^* \to G^*.$$
From now on a co-operational bivariant class is a collection of maps, not necessarily homomorphisms, and also the co-operational bivariant theory $\bB^{coop}F^*(X \to Y)$ obtained from a contravariant functor is considered as a set, even if it can have finer algebraic structures.
As we see above that even in the case of a Grothendieck transformation $\ga: \bB \to \bB'$ it is not clear whether there is a Grothendieck transformation between the associated co-operational bivariant theories $\bB^{coop}$ and $\bB'^{coop}$, it is not clear at all whether there exists a Grothendieck transformation
$$\ga_T: \mathbb B^{coop} F^* \to  \mathbb B^{coop} G^* .$$
For that we need to 
consider a subtheory $ \mathbb B^{coop}_T F^*$ of $\mathbb B^{coop} F^*$, which is defined
as follows.

Motivated by (\ref{indiv-com-co}), (\ref{cd-cup}) and  (\ref{sq-cup}), 
we define the following:
\begin{defn}\label{4-defn-bcoop} For $f:X \to Y$,  
an element
$$c \in  \mathbb B^{coop}_T F^i(X \xrightarrow f Y) \quad (\subset \mathbb B^{coop}F^i(X \xrightarrow f Y) )$$
is defined to satisfy
that there exists an element  
$$c^T \in  \mathbb B^{coop} G^i(X \to Y)$$
such that
the following diagram commutes:for $m \in \mathbb Z$ and for $g:Y' \to Y$
\begin{equation}\label{cd-7}
\xymatrix
{F^m(X') \ar[d]_{c_g} \ar[rr]^T && G^m(X') \ar[d]^{(c^T)_g}\\
F^{m+i}(Y') \ar[rr]_T && G^{m+i}(Y').
}
\end{equation}
\end{defn}
\begin{rem}\label{T-coop}
We emphasize that therefore we automatically have the following commutative cube:
\begin{equation}\label{ccube-2}
\xymatrix
{ F^*(X') \ar[dd]_{c_g } \ar[rd]^{h^*} \ar[rr]^{T} && G^*(X') \ar'[d][dd]^(.3){(c^T)_g} \ar[rd]^{h^*} \\
& F^*(X'') \ar[dd]_(.3){ c_{g \circ h}} \ar[rr]^(.4){T}  &&  G^*(X'') \ar[dd]^{(c^T)_{g \circ h}} \\
F^*(X') \ar'[r] [rr]_{T \quad \quad }  \ar[rd]_{h^*} && G^*(X') \ar[rd]_{h^*} \\
& F^*(X'') \ar[rr] _{T \quad \quad  \quad }  &&  G^*(X'').
}
\end{equation}
\end{rem}
\begin{thm}
The above $\mathbb B^{coop}_T F^*$ is a bivariant theory. Namely, the following product, pushforward and pullback are all well-defined.
\begin{enumerate}
\item {\bf Product}:\quad $\bullet: \mathbb B^{coop}_TF^i(X \xrightarrow f Y) \otimes \mathbb B^{coop}_TF^j(Y \xrightarrow g Z) \to \mathbb B^{coop}_TF^{i+j}(X \xrightarrow {g \circ f} Z)$,
\item {\bf Pushforward}: \quad $f_*: \mathbb B^{coop}_TF^*(X \xrightarrow {g \circ f} Z) \to \mathbb B^{coop}_TF^*(Y \xrightarrow g Z)$,
\item {\bf Pullback}: For a fiber square 
$$\CD
X' @> {g'}  >> X\\
@V {f'} VV @VV fV\\
Y' @>> {g} > Y\\
\endCD
$$
$$g^*: \mathbb B^{coop}_TF^*(X \xrightarrow f Y ) \to \mathbb B^{coop}_TF^*(X' \xrightarrow {f'} Y' ).$$
\end{enumerate} 
\end{thm}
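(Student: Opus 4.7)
The plan is to leverage the fact that $\mathbb{B}^{coop}F^*$ and $\mathbb{B}^{coop}G^*$ are already bivariant theories (by the main theorem of \S\ref{coop}), so the three operations already exist in the ambient theories; what remains is to verify that they restrict to the subtheory $\mathbb{B}^{coop}_T F^*$. In other words, for each operation, given inputs $c$ (and $d$) from $\mathbb{B}^{coop}_T F^*$ with witnesses $c^T$ (and $d^T$) making (\ref{cd-7}) commute, one must produce a witness in $\mathbb{B}^{coop}G^*$ for the output. The natural guesses, which I will use throughout, are
\[
(c\bullet d)^T := c^T \bullet d^T, \qquad (f_*c)^T := f_*(c^T), \qquad (g^*c)^T := g^*(c^T),
\]
where the operations on the right-hand sides are the operations of $\mathbb{B}^{coop}G^*$ from \S\ref{coop}.

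For the product, take $c\in \mathbb{B}^{coop}_T F^i(X\xrightarrow f Y)$ with witness $c^T$ and $d\in \mathbb{B}^{coop}_T F^j(Y\xrightarrow g Z)$ with witness $d^T$, and consider the fiber squares (\ref{cd-1}). By definition $(c\bullet d)_h = d_h\circ c_{h'}$ and $(c^T\bullet d^T)_h = (d^T)_h\circ (c^T)_{h'}$. I would then chase the diagram
\[
T\circ (c\bullet d)_h \;=\; T\circ d_h\circ c_{h'} \;=\; (d^T)_h\circ T\circ c_{h'} \;=\; (d^T)_h\circ (c^T)_{h'}\circ T \;=\; (c^T\bullet d^T)_h\circ T,
\]
using the commuting diagram (\ref{cd-7}) for $d$ in the first step and for $c$ in the second. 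This shows $c\bullet d\in \mathbb{B}^{coop}_T F^{i+j}$ with witness $c^T\bullet d^T$.

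For the pushforward, with $X\xrightarrow f Y\xrightarrow g Z$ and $c\in \mathbb{B}^{coop}_T F^i(X\xrightarrow{g\circ f} Z)$, the identity $(f_*c)_h = c_h\circ (f')^*$ gives
\[
T\circ (f_*c)_h \;=\; T\circ c_h\circ (f')^* \;=\; (c^T)_h\circ T\circ (f')^* \;=\; (c^T)_h\circ (f')^*\circ T \;=\; (f_*c^T)_h\circ T,
\]
where the first equality uses (\ref{cd-7}) for $c$ and the second uses the naturality of $T$ as a natural transformation of contravariant functors applied to $f':X'\to Y'$. For the pullback, an analogous (in fact simpler) chase gives $T\circ (g^*c)_h = T\circ c_{g\circ h} = (c^T)_{g\circ h}\circ T = (g^*c^T)_h\circ T$, so $g^*c^T$ serves as the required witness.

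I do not expect a real obstacle here: since the three operations of $\mathbb{B}^{coop}F^*$ and $\mathbb{B}^{coop}G^*$ are built from composition, pullback, and unchanged indices, each compatibility reduces to the naturality square of $T$ against one of the structure maps, combined with a single invocation of (\ref{cd-7}). The mildly delicate point worth spelling out in the final write-up is that one must verify each candidate witness $c^T\bullet d^T$, $f_*c^T$, $g^*c^T$ actually lies in $\mathbb{B}^{coop}G^*$, but this is immediate because $\mathbb{B}^{coop}G^*$ has been shown to be a bivariant theory and these are precisely its product, pushforward and pullback of elements already in it.
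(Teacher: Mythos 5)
Your proof is correct and takes essentially the same approach as the paper: you choose the same witnesses $(c\bullet d)^T := c^T\bullet d^T$, $(f_*c)^T := f_*(c^T)$, $(g^*c)^T := g^*(c^T)$, and the equational chases you carry out are precisely the content of the commuting trapezoid/triangle diagrams the paper draws, with the witness diagram (\ref{cd-7}) and the naturality of $T$ invoked at the same points.
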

\begin{proof}
As stated in the above statement, it suffices to show the well-defined-ness of the above operations, since the properties of the seven axioms of a bivariant theory already hold in $\mathbb B^{coop}F^*$.
\begin{enumerate}
\item {\bf Product}: We show that for $c \in \mathbb B^{coop}_TF^i(X \xrightarrow f Y) $ and $d \in \mathbb B^{coop}_TF^j(Y \xrightarrow g Z) $, the product
$c \bullet d$ belongs to $\mathbb B^{coop}_TF^{i+j}(X \xrightarrow {g \circ f} Z)$, namely there exists some bivariant element $(c \bullet d)^T \in \bB^{coop}G^{i+j}(X \xrightarrow {g \circ f} Z)$ such that the following diagram commutes:
\begin{equation}\label{dia-1}
\xymatrix
{
F^m(X')  \ar[d]_{(c \bullet d)_h} \ar[rr]^{T} && G^m(X') \ar[d]^{((c \bullet d)^T)_h}\\
F^{m+i+j}(Z') \ar[rr]_{T}  && G^{m+i+j}(Z').  \\
}
\end{equation}
Here we consider the following fiber squares:
\begin{equation}\label{cd-1-1}
\CD
X' @> {h''} >> X \\
@V {f'}VV @VV {f}V\\
Y' @> {h'} >> Y \\
@V {g'}VV @VV {g}V \\
Z'  @>> {h} > Z. \endCD 
\end{equation}
In the diagram (\ref{big-dia}) below we have that
\begin{enumerate}
\item the upper and lower trapezoids are both commutative, since $c \in \mathbb B^{coop}_TF^i(X \xrightarrow f Y) $ and $d \in \mathbb B^{coop}_TF^j(Y \xrightarrow g Z) $, 
\item the left and right triangles are both commutative, since by the definition of the co-operational bivariant product $\bullet$ we have
$(c \bullet d)_h = d_h \circ c_{h'}$ and $(c^T \bullet d^T)_h = (d^T)_h \circ (c^T)_{h'}$.
\end{enumerate}
\begin{equation}\label{big-dia}
\xymatrix
{
F^m(X')  \ar[dd]_{(c \bullet d)_h} \ar[dr]^{c_{h'}} \ar[rrr]^{T} &&& G^m(X')  \ar[dl]_{(c^T)_{h'}} \ar[dd]^{(c^T \bullet d^T)_h}\\
& F^{m+i}(Y')  \ar[r]^{T} \ar[dl]^{d_h} & G^{m+i}(Y')  \ar[dr]_{(d^T)_h}\\
F^{m+i+j}(Z') \ar[rrr]_{T}  &&& G^{m+i+j}(Z').  \\
}
\end{equation}
Thus, the outer square of the above diagram (\ref{big-dia}) is commutative. So, if we set 
\begin{equation*}
(c \bullet d)^T := c^T \bullet d^T,
\end{equation*}
then we obtain the above commutative square (\ref{dia-1}). Hence $c \bullet d \in \mathbb B^{coop}_TF^{i+j}(X \xrightarrow {g \circ f} Z)$.
\item {\bf Pushforward}: We show that for $c \in \mathbb B^{coop}_TF^i(X \xrightarrow {g \circ f} Z)$, $f_*c$ belongs to $\mathbb B^{coop}_TF^i(Y \xrightarrow {g} Z)$, i.e., there exists some bivariant element $(f_*c)^T  \in \bB^{coop}G^i(Y \xrightarrow g Z)$ such that the following diagram commutes:
\begin{equation}\label{dia-2}
\xymatrix
{
F^m(Y') \ar[d]_{(f_*c)_h} \ar[rr]^T &&  G^m(Y') \ar[d]^{((f_*c)^T)_h}\\
F^{m+i}(Z')  \ar[rr]_T &&  G^{m+i}(Z'). 
}
\end{equation}
Here we consider the above squares (\ref{cd-1-1}) again.
In the diagram (\ref{big-dia2}) below we have that
\begin{enumerate}
\item the upper trapezoid is commutative, since $T$ is a contravariant functor,
\item the lower trapezoid is commutative, since $c \in \mathbb B^{coop}_TF^i(X \xrightarrow {g \circ f} Z)$,
\item the left and right triangles are both commutative, since by the definition of the pushforward we have $(f_*c)_h = c_h \circ (f')^*$ and $(f_*c^T)_h = (c^T)_h \circ (f')^*$. 
\end{enumerate}
\begin{equation}\label{big-dia2}
\xymatrix
{
F^m(Y') \ar[dd]_{(f_*c)_h} \ar[dr]^{(f')^*} \ar[rrr]^T &&& G^m(Y')  \ar[dl]_{(f')^*} \ar[dd]^{(f_*c^T)_h} \\
& F^m(X') \ar[dl]^{c_h} \ar[r]^T & G^m(X') \ar[dr]_{(c^T)_h}  \\
F^{m+i}(Z')  \ar[rrr]_T &&& G^{m+i}(Z').
}
\end{equation}
Thus, the outer square of the above diagram (\ref{big-dia2}) is commutative. So, if we set 
\begin{equation*}
(f_*c)^T := f_*c^T,
\end{equation*}
then we obtain the above commutative square (\ref{dia-2}). Hence $f_*c \in \mathbb B^{coop}_TF^i(Y \xrightarrow {g} Z)$.
\item {\bf Pullback}:We show that for $c \in \mathbb B^{coop}_TF^i(X \xrightarrow f Z)$, $g^*c$ belongs to $\mathbb B^{coop}_TF^i(X' \xrightarrow {f'} Y')$, i.e., there exists some bivariant element $(g^*c)^T  \in \bB^{coop}G^i(X' \xrightarrow {f'} Y')$ such that the following diagram commutes:
\begin{equation}\label{dia-3}
\xymatrix
{F^m(X'') \ar[d]_{(g^*c)_h} \ar[rr]^T && G^m(X'') \ar[d]^{((g^*c)^T)_h}\\
F^{m+i}(Y'') \ar[rr]_T && G^{m+i}(Y'')
}
\end{equation}
where we consider the following fiber squares:
\begin{equation*}
\CD
X'' @> {h'}  >> X' @> {g'}  >> X\\
@V {f''} VV @VV f'V @VV fV\\
Y'' @>> {h} > Y' @>> {g} > Y.\\
\endCD
\end{equation*}
Since $c \in \mathbb B^{coop}_TF^i(X \xrightarrow f Z)$ and $(g^*c)_h = c_{g \circ h}$ by definition, we have the following commutative diagram:
\begin{equation}\label{cd-4}
\xymatrix
{F^m(X'') \ar[d]_{c_{g \circ h} = (g^*c)_h} \ar[rr]^T && G^m(X'') \ar[d]^{(c^T)_{g \circ h} = (g^*(c^T))_h}\\
F^{m+i}(Y'') \ar[rr]_T && G^{m+i}(Y'').
}
\end{equation}
So, if we set 
\begin{equation*}
(g^*c)^T:=g^*(c^T),
\end{equation*} 
we get the above commutative square (\ref{dia-3}). Thus we get $g^*c \in \bB^{coop}_TF^i(X' \xrightarrow {f'} Y')$.
\end{enumerate}
\end{proof}
\begin{rem}\label{surj} In the proof of the above Theorem \ref{T-coop} we set 
\begin{enumerate}
\item $(c \bullet d)^T := c^T \bullet d^T$,
\item $(f_*c)^T :=f_*(c^T)$, 
\item $(g^*c)^T :=g^*(c^T)$. 
\end{enumerate}
However, we see that if $T:F^* \to G^*$ is surjective, i.e., $T:F^*(W) \to G^*(W)$ is surjective for any object $W$, then these equalities automatically hold.
For example, as to (1), we can show it as follows. We have the following commutative diagrams:
\begin{equation*}\label{dia-6}
\xymatrix
{
F^m(X')  \ar[d]_{(c \bullet d)_h} \ar[rrr]^{T} &&& G^m(X') \pdarrow{((c \bullet d)^T)_h}{(c^T \bullet d^T)_h}\\
F^{m+i+j}(Z') \ar[rrr]_{T}  &&& G^{m+i+j}(Z').  \\
}
\end{equation*}
Hence we have
\begin{equation*}
((c \bullet d)^T)_h \circ T = T \circ (c \bullet d)_h = (c^T \bullet d^T)_h \circ T,
\end{equation*}
which implies that $((c \bullet d)^T)_h \circ T = (c^T \bullet d^T)_h \circ T$, i.e., $\left (((c \bullet d)^T)_h - (c^T \bullet d^T)_h \right) \circ T=0$. So, if $T:F^m(X') \to G^m(X')$ is surjective, then $((c \bullet d)^T)_h - (c^T \bullet d^T)_h =0$, hence $((c \bullet d)^T)_h = (c^T \bullet d^T)_h$. Therefore we get $(c \bullet d)^T = c^T \bullet d^T$. The same argument can be applied to (2) and (3).
\end{rem}
A Grothendieck transformation which we are looking for is the following:
\begin{cor} Let $T:F^* \to G^*$ be a natural transformation between two contravariant functors. For $\mathbb B^{coop}_T F^*$ we assume that bivariant elements $c^T$ in Definition \ref{4-defn-bcoop} satisfy the following three properties (if $T$ is surjective, they are automatically satisfied, as pointed out in Remark \ref{surj}):
\begin{enumerate}
\item $(c \bullet d)^T = c^T \bullet d^T$,
\item $(f_*c)^T =f_*(c^T)$, 
\item $(g^*c)^T =g^*(c^T)$. 
\end{enumerate}
\noindent 
Then we have a Grothendieck transformation
$$\ga_T: \mathbb B^{coop}_T F^* \to  \mathbb B^{coop} G^*$$
which is a collection of 
$\ga_T: \mathbb B^{coop}_T F^*(X \xrightarrow f Y)  \to  \mathbb B^{coop} G^*(X \xrightarrow f Y)$
defined by for each $c \in \mathbb B^{coop}_T F^*(X \to Y)$
$$\ga_T(c) := c^T.$$
Furthermore $\ga_T(c) = c^T$ means that for $m \in \mathbb Z$ and for $g:Y' \to Y$ we have the commutative diagram (\ref{cd-7}), i.e., \begin{equation*}
\xymatrix
{F^m(X') \ar[d]_{c_g} \ar[rr]^T && G^m(X') \ar[d]^{(c^T)_g}\\
F^{m+i}(Y') \ar[rr]_T && G^{m+i}(Y').
}
\end{equation*}
\end{cor}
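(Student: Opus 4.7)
The plan is to verify directly the three Grothendieck transformation axioms of Definition \ref{groth} for $\gamma_T$, using the three hypotheses of the corollary. The bulk of the work is bookkeeping, since the hypotheses $(c \bullet d)^T = c^T \bullet d^T$, $(f_*c)^T = f_*(c^T)$, and $(g^*c)^T = g^*(c^T)$ translate verbatim into the required compatibilities with bivariant product, pushforward, and pullback.

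First I would confirm that $\gamma_T(c) := c^T$ is a well-defined map on $\mathbb{B}^{coop}_T F^*(X \to Y)$. By the very definition of $\mathbb{B}^{coop}_T F^*$, every such $c$ admits at least one $c^T \in \mathbb{B}^{coop} G^*(X \to Y)$ making the square (\ref{cd-7}) commute; the three hypotheses amount to the assertion that a choice of $c^T$ has been made coherently for all bivariant elements. I would then check additivity: given $c_1, c_2 \in \mathbb{B}^{coop}_T F^*(X \to Y)$, both $T$ and each $(c_j)_g$ are homomorphisms, so $c_1^T + c_2^T$ fits into the commutative square (\ref{cd-7}) for $c_1 + c_2$, and hence serves as a legitimate value of $(c_1 + c_2)^T$. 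Choosing this as the representative gives $\gamma_T(c_1 + c_2) = \gamma_T(c_1) + \gamma_T(c_2)$.

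Next I would verify the three Grothendieck axioms in turn. For $c \in \mathbb{B}^{coop}_T F^i(X \xrightarrow{f} Y)$ and $d \in \mathbb{B}^{coop}_T F^j(Y \xrightarrow{g} Z)$,
\[
\gamma_T(c \bullet d) \;=\; (c \bullet d)^T \;=\; c^T \bullet d^T \;=\; \gamma_T(c) \bullet \gamma_T(d)
\]
by hypothesis~(1). Analogously $\gamma_T(f_* c) = (f_*c)^T = f_*(c^T) = f_* \gamma_T(c)$ by hypothesis~(2), and $\gamma_T(g^* c) = (g^*c)^T = g^*(c^T) = g^* \gamma_T(c)$ by hypothesis~(3). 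The final sentence of the corollary --- that $\gamma_T(c) = c^T$ means commutativity of diagram (\ref{cd-7}) for every $m$ and every $g : Y' \to Y$ --- is just a restatement of Definition \ref{4-defn-bcoop}.

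The one subtlety, and the expected main obstacle, is that $c^T$ need not be uniquely determined by $c$, so in principle $\gamma_T$ depends on a system of choices. The corollary's three hypotheses are designed precisely to eliminate this issue: they assert that whatever coherent choice is made, it is stable under $\bullet$, $f_*$, and $g^*$, so the resulting $\gamma_T$ automatically satisfies the axioms. In the surjective setting of Remark \ref{surj}, as already observed, the ambiguity disappears entirely, since $c^T$ is then forced by $c$ and $T$ alone.
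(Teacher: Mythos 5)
Your main argument is correct and is exactly what the paper intends (the paper in fact leaves this corollary without a written proof): with the three hypotheses in hand, each Grothendieck-transformation axiom of Definition \ref{groth} for $\gamma_T(c):=c^T$ is literally the corresponding hypothesis, and the final sentence of the statement is just a restatement of Definition \ref{4-defn-bcoop}. You also correctly identify the key subtlety — non-uniqueness of $c^T$ — and correctly interpret the three hypotheses as the assertion that a coherent system of choices has been fixed.

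The one place where your write-up slips is the additivity check. Two remarks. First, it is unnecessary: in the paragraph immediately preceding Definition \ref{4-defn-bcoop} the paper explicitly declares that from that point on the co-operational bivariant theory ``is considered as a set, even if it can have finer algebraic structures,'' so a Grothendieck transformation in this context is just a collection of maps of sets, and no additivity needs to be verified. Second, even setting that aside, the argument you give would not work as stated: once a coherent system $c\mapsto c^T$ is fixed (which is what the three hypotheses presuppose), you cannot ``re-choose'' $(c_1+c_2)^T$ to be $c_1^T+c_2^T$ --- the value $(c_1+c_2)^T$ is already determined by the chosen system, and nothing in the hypotheses forces it to agree with $c_1^T+c_2^T$. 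Fortunately this does not affect the validity of the proof, since additivity is not part of what is being claimed in the set-valued setting; I flag it only because the reasoning there is the kind of move (modifying a globally fixed choice pointwise) that would cause real trouble if additivity were actually required.
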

\section{$ \mathbb B^{coop}F^{*}(X \xrightarrow {id_X} X)$ and cohomology operations}

Once again we emphasize that $\mathbb B^{coop}F^{*}(X \xrightarrow {\op{id}_X} X)$ consists of a collection of homomorphisms
$$c_g: F^m(X') \to F^{m+i}(X')$$
for all $g:X' \to X$  and for $X'' \xrightarrow {h} X' \xrightarrow g X$, the following diagram commutes:
$$
\xymatrix
{F^m(X'') \ar[d]_{c_{g \circ h}^k} && F^m(X')\ar[ll]_{(h')^*} \ar[d]^{c_g^k}\\
F^{m+i}(X'') && F^{m+i}(X') \ar[ll]^{h^*}.
}
$$ 
In other words, $c_g: F^m(X') \to F^{m+i}(X')$ is what is called \emph{a cohomology operation}, which is \emph{a natural self-transformation} of a contravariant functor. Therefore $ \mathbb B^{coop}F^{*}(X \xrightarrow {\op{id}_X} X)$ consists of natural self-transformations of the contravariant functor $F^*$. The above commutative cube (\ref{ccube-2}) means that $T$ is \emph{a natural transformation between two contravariant functors equipped with natural self - transformations}. This fact is sometimes expressed as ``cohomology operations of $F^*$ and $G^*$ are \emph{compatible with} the natural transformation $T: F^* \to G^*$", as seen above (e.g., see \cite{nLab}).
\begin{rem} Corresponding to the identity $\op{id}_X:X \to X$, $c_g: F^m(X') \to F^{m+i}(X')$ is a cohomology operation \emph{which is associated to the identity $\op{id}_X:X \to X$}. So, for a general map $f: X \to Y$,
$c_g: F^m(X') \to F^{m+i}(Y')$ could be called \emph{a ``generalized" cohomology operation\footnote{The referee pointed out that it is natural to consider cohomology operations that map between cohomologies of different spaces: power operation is such an example that occurs naturally (e.g., see \cite{Qui}).} associated to the map $f:X \to Y$}, where we consider
the fiber square \, \, \, 
\begin{equation*}
\CD
X' @> {g'}  >> X\\
@V {f'} VV @VV fV\\
Y' @>> {g} > Y.\\
\endCD
\end{equation*}
\end{rem}
\begin{rem}
As we remarked above, if $F^*(-)$ has values in an arbitrary category, then $ \mathbb B^{coop}F^{*}(X \xrightarrow {\op{id}_X} X)$ should be a collection of \emph{morphisms}, hence, for example, if we consider the category of rings, then it is a collection of ring homomorphisms; if we consider the category of sets, then it is a collection of maps. Even if we consider the category of rings or (graded) abelian groups, if we ignore some structures, e.g., ignore all the structures and consider only just sets, then it has to be a collection of maps. In other words, it depends on what kind of situations we consider. 
\end{rem}
\begin{rem} As pointed out by T. Annala (in private communication), if we consider a more refined co-operational bivariant theory $ \widehat{\mathbb B}^{coop}F^{*}$ requiring certain extra conditions, then we will obtain an isomorphism $\widehat{\mathbb B}^{coop}F^{*}(X \xrightarrow {id_X} X) \cong F^*(X)$, which is similar to the case of the refined operational bivariant theory of Chow homology theory treated in \cite[\S 17.3]{Ful}.
Details of this and some other refined co-operational bivariant theories will be treated in a different paper \cite{Yo2}.
\end{rem}

Here are some examples.
\begin{ex}\label{Adam} For this example, e.g., see Karoubi's book \cite[IV, 7.13 Theorem and V, 3.37 Theorem]{Kar}.
The $k$-th Adams operation 
$$\Psi^k:K(X) \to K(X)$$
 for the topological K-theory is a ring homomorphism and it turns out that the Adams operation is the only operation satisfying a ring homomorphism\footnote{If we consider only the additive structure, i.e., a group structure, ignoring the product structure, then there may be other operations.}. The Adams-like operation 
$$\Psi^k_H:H^{ev}(X;\bQ) \to H^{ev}(X;\bQ)$$
 for the even cohomology with rational coefficients is defined by
 $\Psi^k_H(x) := k^r x$ for $x \in H^{2r}(X, \bQ)$. The Chern character $ch:K(X) \to H^{ev}(X ;\bQ)$ is compatible with these Adams operations, i.e., the following diagram commutes:
$$
\xymatrix
{
K(X) \ar[d]_{\Psi^k}\ar[rr]^{ch}  && H^{ev}(X;\bQ) \ar[d]^{\Psi^k_H}\\
K(X) \ar[rr]_{ch} && H^{ev}(X;\bQ).
}
$$
which is compatible with pullback, i.e., for any continuous map $h:X \to Y$ we have the commutative cube:
\begin{equation*}
\xymatrix
{ K(Y) \ar[dd]_{\Psi^k } \ar[rd]^{h^*} \ar[rr]^{ch} && H^{ev}(Y;\bQ) \ar'[d][dd]^(.3){\Psi^k_H} \ar[rd]^{h^*} \\
& K(X) \ar[dd]_(.3){\Psi^k} \ar[rr]^(.3){ch}  &&  H^{ev}(X;\bQ)\ar[dd]^{\Psi^k_H} \\
K(Y)\ar'[r] [rr]_{ch \quad \quad }  \ar[rd]_{h^*} && H^{ev}(Y;\bQ)\ar[rd]_{h^*} \\
& K(X) \ar[rr] _{ch \quad \quad  \quad }  &&  H^{ev}(X;\bQ).
}
\end{equation*}
Then for $g:X' \to X$, we have the following commutative diagram:
$$
\xymatrix
{
K(X') \ar[d]_{\Psi^k}\ar[rr]^{ch}  && H^{ev}(X';\bQ) \ar[d]^{\Psi^k_H}\\
K(X') \ar[rr]_{ch} && H^{ev}(X';\bQ).
}
$$
That is, if we use the previous notation, for any $g:X' \to X$, we have $(\Psi ^k)_g = \Psi ^k$ and $(\Psi ^k)^{ch} = \Psi^k_H$.
 From the Chern character $ch:K(X) \to H^{ev}(X ;\bQ)$ we get the following Grothendieck transformation
$$\ga_{ch}: \mathbb B^{coop}_{ch} K^*(X \to Y)  \to  \mathbb B^{coop} H^{ev *}(X \to Y)_{\mathbb Q}.$$
Here we note that in this case $\mathbb B^{coop}_{ch} K^*(X \xrightarrow {\op{id}_X} X) = \mathbb B^{coop} K^*(X \xrightarrow {\op{id}_X} X)$, which consists of only Adams operations, provided that we require the homomorphism $c_g:K(X') \to K(X')$ to be a ring homomorphism. 
\end{ex}
\begin{ex} Let $\Omega^*(X)$ be Levine--Morel's algebraic cobordism on smooth schemes and let $pr: \Omega^*(X) \to CH^*(X)$ be the canonical map to the Chow cohomology defined by $pr([V \xrightarrow h X]):=h_*[V] \in CH^*(X)$. Let $LN_I: \Omega^*(X) \to \Omega^{* + |I|}(X)$ be the Landwever--Novikov operation for a partition $I = (n_1 \geq n_2 \geq \cdots \geq n_i>0)$ with $|I|:=n_1 + n_2 + \cdots + n_i$ being the degree of $I$, and $LN^{CH}_I: CH^*(X) \to CH^*(X)$ be the Chow cohomology version of the Landwever--Novikov operation (see \cite[\S 3]{Zain}). Then we have the following commutative diagram:
$$
\xymatrix
{
\Omega^*(X)  \ar[d]_{LN_I}\ar[rr]^{pr}  && CH^*(X) \ar[d]^{LN^{CH}_I}\\
\Omega^*(X)\ar[rr]_{pr} && CH^*(X).
}
$$

Similarly to the above, for $g:X' \to X$, we have the following commutative diagram:
$$
\xymatrix
{
\Omega^*(X')  \ar[d]_{LN_I}\ar[rr]^{pr}  && CH^*(X') \ar[d]^{LN^{CH}_I}\\
\Omega^*(X')\ar[rr]_{pr} && CH^*(X').
}
$$
That is, if we use the previous notation, for any $g:X' \to X$, we have $(LN_I)_g = LN_I$ and $(LN_I)^{ch} = LN^{CH}_I$.

From the canonical map $pr: \Omega^*(X) \to CH^*(X)$ we get the following Grothendieck transformation
$$\ga_{pr}: \mathbb B^{coop}_{pr} \Omega^*(X \to Y)  \to  \mathbb B^{coop} CH^*(X \to Y).$$
The Landwever--Novikov operations $LN_I: \Omega^*(X) \to \Omega^*(X)$ become elements of $\mathbb B^{coop}_{pr} \Omega^*(X \xrightarrow {id_X} X)$.  
\end{ex}
\begin{ex}\label{LN} Let $CH^*(X)/2$ be the Chow cohomology mod out by the 2-torsion part and
let $\overline{pr}:\Omega^*(X) \to CH^*(X)/2$ be the composition of the above $pr:\Omega^*(X) \to CH^*(X)$ and the projection $CH^*(X) \to CH^*(X)/2$. Then Brosnan, Levine and Merkurjev (see \cite{Vishik} and \cite{Bros}, \cite{Levine} and \cite{Merk} ) showed that there exists (unique) operation $S^i:CH^*(X)/2 \to CH^*(X)/2$, called Steenrod operation, making the following diagram commutative
$$
\xymatrix
{
\Omega^*(X)  \ar[d]_{S^i_{LN}}\ar[rr]^{\overline{pr}}  && CH^*(X)/2 \ar[d]^{S^i}\\
\Omega^*(X)\ar[rr]_{\overline{pr}} && CH^*(X)/2
}
$$
which is compatible with pullback. Here $S^i_{LN} := LN_I$ where  $I = (n_1 \geq n_2 \geq \cdots \geq n_i>0)$ with $n_1 =n_2 = \cdots =n_i=1$, thus $|I|=i$.
As above, we can get the following Grothendieck transformation
$$\ga_{\overline{pr}}: \mathbb B^{coop}_{\overline{pr}} \Omega^*(X \to Y)  \to  \mathbb B^{coop} CH^*(X \to Y)/2.$$
\end{ex}
\vspace{0.5cm}
\noindent
{\bf Acknowledgements}:

This work was motivated by a question posed by Toru Ohmoto  (in private communication) after his recent work \cite{Ohmoto} on Kazarian's conjecures \cite{Kaz} on Thom polynomials. The author thanks him for posing the question and some useful discussion. The author also thanks Burt Totaro for answering some questions about his work \cite{Totaro} and Toni Annala for some comments on the first draft of this paper.
Some of this work was done during the author's visiting Fakult\"at f\"ur Mathematik, Universit\"at Wien, in May 2023. The author thanks Herwig Hauser for a very nice hospitality and financial support. Finally the author thanks the anonymous referee for his/her careful reading of the paper and many valuable and constructive comments, suggestions and questions.
The author is supported by JSPS KAKENHI Grant Numbers JP19K03468 and JP23K03117.

\end{document}